\newtheorem{theorem}{Theorem}[section]
\newtheorem{question}{Question}
\newcommand{\ds}{\displaystyle}
\newcommand{\q}{\mathbb{Q}}
\newcommand{\z}{\mathbb{Z}}
\newenvironment{proof}[1][Proof]{\begin{trivlist}
\item[\hskip \labelsep {\bfseries #1}]}{\end{trivlist}}
\newcommand{\qed}{\hfill \ensuremath{\Box}}
\begin{document}

\thispagestyle{empty}
\title{\textbf{A Triangle-free, 4-chromatic $\mathbb{Q}^3$ Euclidean Distance Graph Scavenger Hunt!}}

\author{\textbf{Jonathan Joe\footnote{jonathan.joe@mga.edu} \hspace{20pt}Matt Noble\footnote{matthew.noble@mga.edu}}  \\
Department of Mathematics and Statistics\\
Middle Georgia State University\\
Macon, GA 31206}

\date{}
\maketitle

\begin{abstract}
For $d > 0$, define $G(\mathbb{Q}^3, d)$ to be the graph whose set of vertices is the rational space $\mathbb{Q}^3$, where two vertices are adjacent if and only if they are a Euclidean distance $d$ apart.  Let $\chi(\mathbb{Q}^3, d)$ be the chromatic number of such a graph or, in other words, the minimum number of colors needed to color the points of $\mathbb{Q}^3$ so that no two points at distance $d$ apart receive the same color.  An open problem, originally posed by Benda and Perles in the 1970s, asks if there exists $d$ such that $\chi(\mathbb{Q}^3, d) = 3$.  Through numerous efforts over the years, $\chi(\mathbb{Q}^3, d)$ has been determined for many values of $d$, and for all those distances $d$ where $\chi(\mathbb{Q}^3, d)$ has not been exactly pinned down, it is known that $\chi(\mathbb{Q}^3, d) \in \{3,4\}$.  In our work, we detail several search algorithms we have employed to find $4$-chromatic subgraphs of various graphs $G(\mathbb{Q}^3, d)$ whose chromatic number was previously unknown.  Ultimately, we conjecture that no $3$-chromatic $G(\mathbb{Q}^3, d)$ exists.  Along the way, we pose a few related questions that we feel are of interest in their own right.\\

\noindent \textbf{Keywords and phrases:} Euclidean distance graph, chromatic number, rational points, triangle-free graph, Gr\"{o}tzsch graph
\end{abstract}

\section{Introduction}

Let $\mathbb{R}$, $\q$, and $\z$ denote the rings of real numbers, rational numbers, and integers, respectively.  As is typical, the set $\mathbb{Q}^n$ -- that is, the set of all $n$-tuples whose entries are in $\mathbb{Q}$ -- will be referred to as the set of \textit{rational points} of $\mathbb{R}^n$.  For points $a,b \in \mathbb{R}^n$, designate by $|a-b|$ the Euclidean distance from $a$ to $b$.  

The concept central to this work will be that of the \textit{Euclidean distance graph}, with a detailed historybeing found in \cite{soifer}.  For $d > 0$ and $S \subseteq \mathbb{R}^n$, define $G(S, d)$ to be the graph whose vertices are the points of $S$, with any two vertices $a,b$ being adjacent if and only if $|a-b|=d$.  Define the graph $G(S, d)$ to be \textit{non-trivial} if $d$ is actually realized as a distance between points of $S$, as otherwise, $G(S, d)$ would have an empty edge set and would not be of interest.  Let $\chi(S, d)$ be the chromatic number of $G(S, d)$ or, in other words, the minimum number of colors needed to color the points of $S$ so that no two points distance $d$ apart receive the same color.  Such a coloring of $G(S, d)$ is often referred to as being \textit{proper} or is said to \textit{forbid} the distance $d$.

The following was originally posed by Benda and Perles in \cite{bendaperles}.

\begin{question} \label{mainquestion} Does there exist $d > 0$ such that $\chi(\q^3, d) = 3$?
\end{question}

Before we delve into what is known about Question \ref{mainquestion}, and what the general thrust of our current work will be, some time should be spent detailing the state of knowledge concerning proper colorings of distance graphs on the rational points, along with the unusual history of \cite{bendaperles} itself.  Although the notion of properly coloring $\mathbb{R}^n$ was initially put forth by Edward Nelson in the early 1950s, Woodall \cite{woodall} was the first to consider chromatic numbers of graphs $G(\mathbb{Q}^n, d)$ by showing as a secondary result in a 1972 article \cite{woodall} that $\chi(\mathbb{Q}^2, 1) = 2$.  Benda and Perles produced their manuscript in the mid-1970s, but were unaware of \cite{woodall}, as they give an alternate proof that $\chi(\mathbb{Q}^2, 1) = 2$ along with proofs showing $\chi(\mathbb{Q}^3, 1) = 2$ and $\chi(\mathbb{Q}^4, 1) = 4$.  They also note that $\chi(\mathbb{Q}^3, d_1)$ and $\chi(\mathbb{Q}^3, d_2)$ may be unequal for distinct $d_1, d_2$ by displaying a 4-chromatic subgraph of $G(\mathbb{Q}^3, \sqrt2)$.  Benda and Perles appear to be aware in \cite{bendaperles} of the fact that $\chi(\mathbb{Q}^4, d) = 4$ for all non-trivial graphs $G(\mathbb{Q}^4, d)$, which in regard to Question \ref{mainquestion} implies that $\chi(\mathbb{Q}^3, d) \leq 4$ for all $d > 0$, but they do not give a formal proof.

Despite its original results and the questions posed in \cite{bendaperles}, Benda and Perles surprisingly declined to  publish their manuscript, and it did not formally appear in print until the year 2000.  Perhaps even more surprising, given that in the pre-internet days of the 1970s and -80s, even published works were often slow to make their way to the greater mathematical landscape, by the mid-1980s \cite{bendaperles} had found its way to the growing community of mathematicians interested in Euclidean distance graph coloring problems, and its results had become widely known (see \cite{bendaperleshistory} for a historical perspective).          

At present Question \ref{mainquestion} has not been fully resolved, but through numerous efforts over the years, $\chi(\q^3, d)$ is known for many values of $d$.  To begin, note that when $d_1, d_2$ are rational multiples of each other, the graphs $G(\mathbb{Q}^3, d_1)$ and $G(\mathbb{Q}^3, d_2)$ are isomorphic by an obvious scaling argument, and it follows that any non-trivial graph $G(\mathbb{Q}^3, d)$ is isomorphic to a graph $G(\mathbb{Q}^3, \sqrt{r})$ where $r$ is a square-free positive integer.  Indeed, for the rest of our work, it is assumed that any graph $G(\mathbb{Q}^3, \sqrt{r})$ has $r$ of this form.  In \cite{twocolors}, Johnson shows that if $r$ is odd, $G(\mathbb{Q}^3, \sqrt{r})$ can be properly 2-colored.  In \cite{chow}, Chow proves that for $r$ even, $\chi(\mathbb{Q}^3, \sqrt{r}) \geq 3$.  As previously mentioned, it is hinted at in \cite{bendaperles} that $\chi(\mathbb{Q}^3, \sqrt{r}) \leq 4$ for all $r$, and we note that this fact is seen with proof in \cite{burkert}, \cite{jst}, and likely in other places as well.  It is shown by the second author in \cite{noble1} that if $r$ is even, but has no odd prime factors congruent to $2$ modulo $3$, then $\chi(\mathbb{Q}^3, \sqrt{r}) = 4$.  For all remaining values of $r$ -- that is, those $r$ which are even, and have at least one odd prime factor congruent to $2$ modulo $3$ -- $\chi(\mathbb{Q}^3, \sqrt{r})$ is unknown, except for the specific case of $r = 10$ where it is shown in \cite{noble1} that $\chi(\mathbb{Q}^3, \sqrt{10}) = 4$.

Let  $T = \{10, 22, 30, 34, 46, 58, 66, \ldots \}$ be the set of all square-free, even positive integers, each of which contains at least one odd prime factor congruent to $2$ modulo $3$.  From the discussion above, we have that $\chi(\mathbb{Q}^3, \sqrt{10}) = 4$ and for all other $t \in T$, $\chi(\mathbb{Q}^3, \sqrt{t}) \in \{3,4\}$.  Given the progression of results listed in the previous paragraph, one may naturally wonder what it is about those $t \in T$ that has made $\chi(\mathbb{Q}^3, \sqrt{t})$ hard to exactly pin down.  The difficulty could stem from one of two reasons, depending on whether $\chi(\mathbb{Q}^3, \sqrt{t})$ equals 3 or 4, for that particular value of $t$. Results in the literature giving proper colorings of graphs $G(\mathbb{Q}^n, d)$ for $n \in \{2,3,4\}$ and any $d > 0$ may vary somewhat in their methodology, but at their heart, they all use the same type of argument.  Elementary number-theoretic facts concerning quadratic residues and representations of rational numbers as sums of squares are assembled, and they are then combined, typically via some manner of induction argument, to show the existence of (or explicitly produce) the desired coloring.  If it just so happens that $\chi(\mathbb{Q}^3, \sqrt{t}) = 3$ for some $t \in T$, a proper 3-coloring of $G(\mathbb{Q}^3, \sqrt{t})$ would be a wholly original departure from what has previously been done.  For this reason, we doubt that a 3-chromatic graph $G(\mathbb{Q}^3, \sqrt{t})$ exists.

Now, a reader is certainly free to object and say that just because something has not been seen before, one should not discount its possibility.  Yet still, it seems that the most likely avenue of success would be in attempting to find a 4-chromatic subgraph of the $G(\mathbb{Q}^3, \sqrt{t})$ under consideration.  The difficulty here lies in the fact that $G(\mathbb{Q}^3, \sqrt{t})$ is triangle-free for all $t \in T$ (seen as an immediate consequence of Ionascu's work \cite{ionascu1} enumerating equilateral triangles with vertices in $\mathbb{Z}^3$).  There certainly exist triangle-free 4-chromatic graphs.  In fact, there exist triangle-free graphs of arbitrarily large chromatic number.  However, constructing them as subgraphs of a desired $G(\mathbb{Q}^3, \sqrt{t})$ appears to be quite tricky, and the author of \cite{noble1} will somewhat sheepishly admit to stumbling onto a 4-chromatic subgraph of $G(\mathbb{Q}^3, \sqrt{10})$ mostly by accident.  

In Sections 2, 3, and 4 of this article, we will make our search methods more formal and describe three algorithms we have employed to find 4-chromatic subgraphs of $G(\mathbb{Q}^3, \sqrt{t})$ for various $t$, thus showing $\chi(\mathbb{Q}^3, \sqrt{t}) = 4$ for those values of $t$.  These successes lead us to conjecture in Section 5 that Question \ref{mainquestion} has a negative answer.  Along the way, we will pose a few additional questions related to this search process which we feel are of interest in their own right.

\section{A Greedy Approach}

In a 2003 article \cite{mann}, Mann utilizes several search algorithms to produce subgraphs of $G(\mathbb{Q}^n, 1)$ having large chromatic number, for $n \in \{6,7,8\}$.  The second algorithm described in \cite{mann} essentially follows a greedy approach.  One begins with an induced subgraph of $G(\mathbb{Q}^n, 1)$ having a sufficiently large chromatic number, along with a subset $A$ of $\mathbb{Q}^n$.  Points of $A$ are then iteratively selected to be new vertices in the graph, where at each step, the selected point is chosen that is adjacent to the largest number of vertices already in the graph.  After each new vertex is added, the chromatic number of the resulting graph is computed, and the algorithm runs until a graph with higher chromatic number is found, or, in an unsuccessful attempt, all points of $A$ have been added to the graph.

We give our version of Mann's algorithm as it is applied to $G(\mathbb{Q}^3, \sqrt{t})$, and afterward, some commentary on the individual steps. 

\begin{enumerate}[label={\textbf{Step \arabic*:}},itemindent=1.5em,itemsep=5pt]

\item Select $V_s \subset \mathbb{Q}^3$ as an initial set of vertices, along with a set $A \subset \mathbb{Q}^3$.

\item Attempt to find a proper 3-coloring of $G$, the graph with induced vertex set $V_s$.  If no such 3-coloring can be found, stop and output $G$.  Otherwise, store the 3-coloring that is produced.

\item Compute all vertices of $A \setminus V_s$ that are adjacent to those of $V_s$.  Call this new set of vertices $V_h$.   

\item  Select a ``best" candidate $v_0 \in V_h$ to be a vertex in our graph.  Use the following criteria.
\begin{itemize}
		\item Among all $v \in V_h$, let $v_0$ be one which is adjacent to the largest number of vertices already in $V_s$.
		\item If multiple $v \in V_h$ are tied for having the most neighbors in $V_s$, select $v_0$ which has the highest number of distinct colors among those that it is adjacent to.
				\end{itemize}
\item Let $V_s' = V_s \cup \{v_0\}$ and restart Step 1 of the algorithm with initial set of vertices $V_s'$.
\end{enumerate}

We were able to successfully employ the above algorithm to find a 4-chromatic subgraph $H$ of $G(\mathbb{Q}^3, \sqrt{22})$.  The graph $H$ was of order 56, although we were ultimately able to find a 4-critical subgraph $H'$ of $H$, which was of order 29.  It is given in the Appendix.

It should be noted that some care must be taken in choosing $A$.  In Mann's description of the algorithm, he lets points of $A$ be those of $\mathbb{Q}^n$ having each of their coordinates of the form $\frac{x_i}{2^j}$ where $x_i$ is an integer and $j \in \{0,1\}$.  This is done in an effort to reduce computing time in Steps 2 and 3.  In $\mathbb{Q}^3$, however, several problems arise with this setup.  Consider a vector $v = \langle \frac{a}{d}, \frac{b}{d}, \frac{c}{d} \rangle$ where $\gcd(a,b,c,d) = 1$ and $|v| = \sqrt{t}$.  Then $a^2 + b^2 + c^2 = td^2$, and since $0$ and $1$ are the only quadratic residues of 4, $t \equiv 2 \pmod 4$ implies either zero or two of $a,b,c$ are odd.  It can't be the case that all of $a,b,c$ are even, as that would imply $td^2 \equiv 0 \pmod 4$ which results in $d$ even as well, and if two of $a,b,c$ are odd, then $a^2 + b^2 + c^2 \equiv 2 \pmod 4$ implies $d$ is odd.  It's natural to include at least some points of $\mathbb{Z}^3$ in the original $V_s$, but in doing so, the above observations show that $A$ should not contain $\mathbb{Q}^3$ points having any of their coordinates being reduced fractions with even denominators. 

If $t \equiv 1 \pmod 3$, the set $A$ should have at least some points with coordinates being reduced fractions whose denominators are divisible by $3$.  This is due to Theorem \ref{3coloringtheorem} below, and the simple observation that any subgraph $K$ of $G(\mathbb{Q}^3, \sqrt{t})$ also appears as a subgraph of $G(\mathbb{Z}^3, \sqrt{k^2t})$, where $k$ is the product of all denominators of coordinates entries of vertices of $K$.  If $t \equiv 1 \pmod 3$ and $3$ does not divide $k$, then $k^2t \equiv 1 \pmod 3$ as well.

\begin{theorem} \label{3coloringtheorem} Let $d \in \mathbb{Z}^+$ be odd with $d \equiv 2 \pmod 3$.  Then $\chi(\mathbb{Z}^3, \sqrt{2d}) = 3$.
\end{theorem}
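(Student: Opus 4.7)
The plan is to bound $\chi(\mathbb{Z}^3, \sqrt{2d})$ from above and below separately.

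For the upper bound I would try the linear coloring $\kappa(x_1, x_2, x_3) = (x_1 + x_2 + x_3) \bmod 3$. The hypothesis $d \equiv 2 \pmod{3}$ gives $2d \equiv 1 \pmod{3}$, so any edge vector $(a,b,c) \in \mathbb{Z}^3$ satisfies $a^2 + b^2 + c^2 \equiv 1 \pmod{3}$. Because the nonzero quadratic residues modulo $3$ are just $\{1\}$, exactly one of $a, b, c$ is nonzero modulo $3$ (with the other two divisible by $3$), forcing $a + b + c \equiv \pm 1 \not\equiv 0 \pmod{3}$. Hence $\kappa$ is a proper $3$-coloring, giving $\chi(\mathbb{Z}^3, \sqrt{2d}) \leq 3$.

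For the lower bound I would show the graph is not bipartite. Since $d$ is odd, $2d \equiv 2 \pmod{4}$, so every edge vector has exactly two odd coordinates. Hence all edges of $G(\mathbb{Z}^3, \sqrt{2d})$ lie within the even-sum sublattice $H = \{v \in \mathbb{Z}^3 : v_1 + v_2 + v_3 \text{ is even}\}$, which admits the basis $f_1 = (1, 1, 0)$, $f_2 = (1, 0, 1)$, $f_3 = (0, 1, 1)$. By translation-invariance of the graph, it suffices to rule out a proper $2$-coloring of the induced subgraph on $H$; and since this subgraph is vertex-transitive under $H$, such a coloring would have to come from a group homomorphism $\phi: H \to \mathbb{Z}/2$ with $\phi(v) = 1$ for every edge vector $v$. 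I would fix a representation $2d = a^2 + b^2 + c^2$ with $a, b$ odd (which exists by the three-squares theorem), expand each signed permutation of $(a, b, c)$ in the basis $(f_1, f_2, f_3)$, and reduce coefficients modulo $2$. For an edge vector of parity $(1, 1, 0)$, the coefficient triple $(\alpha, \beta, \gamma) \bmod 2$ must satisfy $\alpha + \beta = 1$, $\alpha + \gamma = 1$, $\beta + \gamma = 0$, so it equals either $(1, 0, 0)$ or $(0, 1, 1)$; flipping the sign on one of the odd coordinates changes $\beta$ modulo $2$ by $1$ and so toggles between the two patterns. Both patterns therefore appear in $S$, forcing $\phi(f_1) = 1$ and $\phi(f_2) + \phi(f_3) = 1$. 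The analogous analyses for parities $(1, 0, 1)$ and $(0, 1, 1)$ yield $\phi(f_2) = 1, \phi(f_1) + \phi(f_3) = 1$ and $\phi(f_3) = 1, \phi(f_1) + \phi(f_2) = 1$, respectively. The three singleton constraints force $\phi(f_i) = 1$ for each $i$, but then each $\phi(f_i) + \phi(f_j) = 0$, contradicting the three pair constraints. Hence no $\phi$ exists, and the graph is not bipartite.

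The main obstacle I anticipate is the degenerate case in which the chosen representation has a vanishing even coordinate (for instance, $2d = 10 = 3^2 + 1^2 + 0^2$), since flipping the sign on the zero coordinate produces nothing new. Fortunately the two sign flips available on the odd coordinates alone suffice to toggle $\beta$ modulo $2$, so both mod-$2$ patterns still arise and the argument survives intact. An alternative route to the lower bound would be to exhibit an explicit odd cycle for each $d$ in the range --- for instance a $5$-cycle when $2d$ is itself a sum of two squares, and a $9$-cycle when not --- but a uniform construction seems to require splitting into subcases based on the shape of the available three-squares representations of $2d$, making the homomorphism argument tidier overall.
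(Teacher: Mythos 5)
Your upper bound is exactly the paper's argument: the paper uses the same coloring $\varphi(x,y,z) \equiv x+y+z \pmod 3$ together with the same observation that $2d \equiv 1 \pmod 3$ forces exactly one coordinate of an edge vector to be nonzero modulo $3$, so that part matches. For the lower bound the paper does no work of its own --- it simply cites \cite{chow} for $\chi(\mathbb{Z}^3,\sqrt{2d}) \geq 3$ --- so your self-contained non-bipartiteness argument is a genuinely different route for that half, and most of it checks out: the parity count showing every edge vector has exactly two odd coordinates, the reduction to the even-sum sublattice $H$, the mod-$2$ coefficient analysis in the basis $f_1,f_2,f_3$, the sign-flip toggle between the two residue patterns, and the handling of a zero even coordinate are all correct.

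The gap is the step ``since this subgraph is vertex-transitive under $H$, such a coloring would have to come from a group homomorphism $\phi: H \to \mathbb{Z}/2$ with $\phi(v)=1$ for every edge vector $v$.'' Vertex-transitivity is not enough for that deduction; what it needs is connectivity of the Cayley graph of $H$ on the set $S$ of edge vectors, i.e.\ that $S$ generates $H$. For a connected bipartite Cayley graph of an abelian group the unique bipartition does define such a homomorphism, but without connectivity the claim fails: the Cayley graph of $\mathbb{Z}$ with connection set $\{\pm 2\}$ is vertex-transitive and bipartite, yet no homomorphism $\mathbb{Z} \to \mathbb{Z}/2$ sends $2$ to $1$. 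Your contradiction also evaluates $\phi$ at $f_1,f_2,f_3$, which is only licensed once $\phi$ is defined on all of $H$ (equivalently, once $\langle S \rangle = H$), and you never establish this.

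The gap is reparable with an argument of the same flavor as the paper's proof of Theorem \ref{vectorlength}. Since $2d \equiv 2$ or $6 \pmod 8$, Legendre's theorem gives a primitive representation $2d = a^2+b^2+c^2$ with $\gcd(a,b,c)=1$; then $(a,b,c)+(a,-b,-c) = (2a,0,0)$ and its analogues, together with the gcd condition, put $(\pm 2,0,0), (0,\pm 2,0), (0,0,\pm 2)$ in $\langle S \rangle$, and since the signed permutations of $(a,b,c)$ realize all three nonzero classes of $H/2\mathbb{Z}^3$, it follows that $\langle S \rangle = H$ and the Cayley graph on $H$ is connected. With that in place your homomorphism reduction and the ensuing parity contradiction go through; equivalently, the same ingredients can be assembled into an explicit odd closed walk (an odd vector cycle in the sense of \cite{gaston}), which is the constructive form of the non-bipartiteness you are after.
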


\begin{proof} As shown in \cite{chow}, $\chi(\mathbb{Z}^3, \sqrt{2d}) \geq 3$.  Note that $2d \equiv 1 \pmod 3$, and since $x^2$ is congruent to 0 or 1 modulo 3 for all integers $x$, any vector $v \in \mathbb{Z}^3$ where $v = \langle x_0, y_0, z_0 \rangle$ and $|v| = \sqrt{2d}$ will have exactly two of $x_0, y_0, z_0$ congruent to 0 modulo 3.  It follows that, for any $p_1 = (x_1, y_1, z_1), p_2 = (x_2, y_2, z_2) \in \mathbb{Z}^3$ with $|p_1 - p_2| = \sqrt{2d}$, exactly two of the differences $(x_1 - x_2), (y_1 - y_2), (z_1 - z_2)$ will be congruent to 0 modulo 3.  The coloring $\varphi: \mathbb{Z}^3 \to \{0,1,2\}$ where $\varphi(x,y,z) \equiv x+y+z \pmod 3$ then guarantees that any adjacent vertices in $G(\mathbb{Z}^3, \sqrt{2d})$ receive different colors.\qed
\end{proof}

For Step 1, it certainly makes sense to begin the algorithm with an initial input graph $G$ having $\chi(G) = 3$, and indeed, in the implementation for $t = 22$, we let $V_s = \{(0,0,0), (\frac{14}{3},\frac{1}{3},\frac{1}{3}), (\frac{19}{3},\frac{-1}{3},\frac{14}{3}), (6,0,0), (3,3,2)\}$, which constitute the vertices of a $5$-cycle in $G(\mathbb{Q}^3, \sqrt{22})$.  Our set $A$ simply consisted of all $\mathbb{Q}^3$ points $p$ such that $3p \in \mathbb{Z}^3$.  Regarding Step 4, as noted in \cite{mann}, it is not at all guaranteed that starting with a $3$-colored graph $G$, and then placing a new vertex that is adjacent to vertices of all three colors will result in a graph $G'$ with $\chi(G') = 4$.  However, this does appear to be a good heuristic to aid in the selection of $v_0$.      
 
We also note that the selection of the initial set $V_s$ is crucial to the success (or failure) of the search algorithm,  not only with respect to the subgraph of $G(\mathbb{Q}^3, \sqrt{t})$ that is induced by $V_s$, but the points of $V_s$ themselves.  We placed a cap of 1000 on the number of vertices to add to the graph, where, if the cap was reached, we terminated the program.  Several failed run-throughs of the algorithm, each with initial sets being different 5-cycles in $G(\mathbb{Q}^3, \sqrt{t})$, reached the cap while still being able to find a proper 3-coloring of the resulting graph. 

The fact that $\chi(\mathbb{Q}^3, \sqrt{22}) = 4$ is of some interest by itself, however, we decided not to pursue this method for other $t \in T$.  With the ultimate goal in this whole line of inquiry being a complete resolution of Question \ref{mainquestion}, it seems unlikely that this style of search would ever lead to success.  That said, in the next two sections, we will take a different angle and instead search for \textit{specific} 4-chromatic subgraphs of $G(\mathbb{Q}, \sqrt{t})$.

\section{Finding a Gr\"{o}tzsch-type Graph}

It is well-known that the Gr\"{o}tzsch graph is the triangle-free 4-chromatic graph of minimum order.  For reference, it is given in Figure \ref{grotzsch} with a standard labeling of its vertices.

\begin{figure}[h]%
\begin{center}
\includegraphics[scale=1.1]{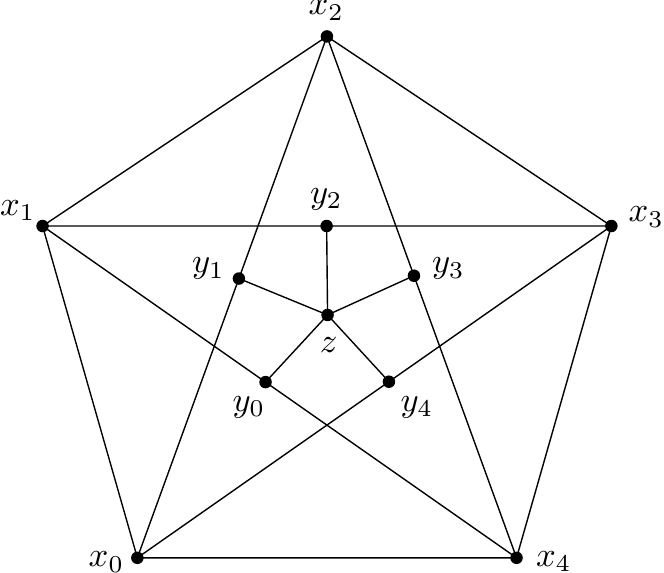}%
\caption{}%
\label{grotzsch}%
\end{center}
\end{figure}

The authors have tried to construct the Gr\"{o}tzsch graph as a subgraph of $G(\mathbb{Q}^3, \sqrt{t})$ for various $t$, but were ultimately unsuccessful.  The primary source of difficulty lies in somehow finding a way to place all five of the vertices $y_0, \ldots, y_4$ at rational points on a sphere having radius $\sqrt{t}$ and also being centered at a rational point.  The standard drawing of the Gr\"{o}tzsch graph in Figure \ref{grotzsch} hides this difficulty and is somewhat misleading, as any representation of the graph as a Euclidean distance graph in $\mathbb{Q}^3$ would not exhibit this amount of radial symmetry.  A classical argument gives the ratio of diagonal length to side length of a regular pentagon as $\frac{1 + \sqrt5}{2}$, and since any ratio of distances each realized between points of $\mathbb{Q}^3$ must be of the form $\sqrt{q}$ for some $q \in \mathbb{Q}$, one can see the impossibility of arranging either $x_0, \ldots, x_4$ or $y_0, \ldots, y_4$ as the vertices of a regular pentagon.

In this section, we will describe a method used to search for (and occasionally, to find) a representation in some $G(\mathbb{Q}^3, \sqrt{t})$ of a 4-chromatic graph $G$ which is similar to the Gr\"{o}tzsch graph.  We first give a rough version of the algorithm, and we will then follow with elaboration on the individual steps, along with proof that $\chi(G) = 4$.  Throughout, any subscript is computed modulo 5.

\begin{enumerate}[label={\textbf{Step \arabic*:}},itemindent=1.5em,itemsep=5pt]

\item Find a 5-cycle in $G(\mathbb{Q}^3, \sqrt{t})$.  Label the vertices of this 5-cycle as $v_0, \ldots, v_4$.

\item For each $i \in \{0, \ldots, 4\}$, let $C_i$ be the circle in $\mathbb{R}^3$ consisting of all points simultaneously at distance $\sqrt{t}$ from each of the vertices $v_{i-1}$ and $v_{i+1}$.

\item  For each $i \in \{0, \ldots, 4\}$, parameterize the points $C_i \cap \mathbb{Q}^3$ in terms of a single rational parameter $s_i$.

\item For each $i \in \{0, \ldots, 4\}$, define a collection of three circles $\mathscr{C}_i = \{C_{i-1}, C_i, C_{i+1}\}$. 

\item Form a suitably large collection of rational numbers, and let $\mathscr{L}$ be the set of all 3-tuples with entries from this collection. 

\item For each $i \in \{0, \ldots, 4\}$, perform the following procedure.
		\begin{itemize}
		\item Select a $3$-tuple $(x,y,z) \in \mathscr{L}$.  
		\item Select rational points $X_{i-1}, Y_i, Z_{i+1}$ on the circles $C_{i-1}, C_i, C_{i+1}$, respectively, by plugging $x = s_{i-1}, y = s_i, z = s_{i+1}$ into the parameterizations given in Step 3.
		\item If the circumradius of points $X_{i-1}, Y_i, Z_{i+1}$ is less than or equal to $\sqrt{t}$, find a point $Q_i$ simultaneously at distance $\sqrt{t}$ from each of $X_{i-1}, Y_i, Z_{i+1}$.  If $Q_i \in \mathbb{Q}^3$, stop.  Otherwise, repeat this step with a new 3-tuple from $\mathscr{L}$. 
		\end{itemize}
\end{enumerate}

If Step 6 has been successfully completed for each $\mathscr{C}_i$, our output is a graph $G$ of order 25 (the vertices $v_0, \ldots, v_4$ along with five sets of four vertices found in Step 6).  This graph is drawn in the Appendix as Figure \ref{grotzschtypegraph}, however to see that $\chi(G) = 4$, it is easier to consider the method in which $G$ was constructed.  We do this below.

\begin{theorem} The graph $G$ resulting from the above algorithm has $\chi(G) = 4$.
\end{theorem}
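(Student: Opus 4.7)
Plan: The upper bound $\chi(G) \leq 4$ is immediate, since $G$ is a subgraph of $G(\mathbb{Q}^3, \sqrt{t})$ and $\chi(\mathbb{Q}^3, \sqrt{t}) \leq 4$ by the results cited in Section 1. The task thus reduces to showing $\chi(G) \geq 4$, which I would establish by contradiction, exploiting the per-iteration structure imposed by Step 6 together with one elementary fact about proper $3$-colorings of $C_5$.

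Suppose $\varphi \colon V(G) \to \{1,2,3\}$ is a proper $3$-coloring. Its restriction to $\{v_0,\ldots,v_4\}$ is a proper $3$-coloring of the inner $5$-cycle, and since the independence number of $C_5$ equals $2$, the three color classes on the cycle have sizes $(1,2,2)$. So there is a unique index $j$ for which $\varphi(v_j)$ appears nowhere else on the cycle. After renaming colors I may assume $\varphi(v_j) = 3$; the remaining four vertices then form a path that must be $2$-colored alternately with $\{1,2\}$, and after a further swap of $1$ and $2$ if needed I may assume $\varphi(v_{j-2}) = 1$, $\varphi(v_{j-1}) = 2$, $\varphi(v_{j+1}) = 1$, $\varphi(v_{j+2}) = 2$.

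I would then zoom in on iteration $i = j$ of Step 6, which introduces $X_{j-1} \in C_{j-1}$, $Y_j \in C_j$, $Z_{j+1} \in C_{j+1}$, together with a vertex $Q_j$ adjacent in $G$ to all three. By the defining property of the circles, every rational point of $C_k$ lies at distance $\sqrt{t}$ from both $v_{k-1}$ and $v_{k+1}$; so $X_{j-1}$ is adjacent to $v_{j-2}$ and $v_j$, $Y_j$ to $v_{j-1}$ and $v_{j+1}$, and $Z_{j+1}$ to $v_j$ and $v_{j+2}$. With only three colors available, for each of these three circle vertices the set of colors on its distinguished neighbors must omit at least one element of $\{1,2,3\}$, which forces $\varphi(Q_j) \in \{\varphi(v_{j-2}),\varphi(v_j)\} \cap \{\varphi(v_{j-1}),\varphi(v_{j+1})\} \cap \{\varphi(v_j),\varphi(v_{j+2})\}$. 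Substituting the values fixed above, this intersection is $\{1,3\} \cap \{1,2\} \cap \{2,3\} = \emptyset$, the desired contradiction, so $\chi(G) \geq 4$ and together with the upper bound $\chi(G) = 4$.

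The main obstacle is conceptual rather than technical: one needs to recognize that the ``bad'' iteration is always the one indexed by the unique-color vertex $v_j$ of the inner $5$-cycle, because the three second-neighbor pairs $(v_{j-2}, v_j)$, $(v_{j-1}, v_{j+1})$, $(v_j, v_{j+2})$ that it sees collectively realize all three two-element subsets of $\{1,2,3\}$, whose common intersection is empty. Once this structural observation is in place, the proof collapses to the short intersection calculation above.
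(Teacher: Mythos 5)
Your proof is correct and follows essentially the same route as the paper: in any proper $3$-coloring of the inner $5$-cycle you locate the uniquely-colored vertex $v_j$ and show that at iteration $i=j$ the vertices $X_{j-1}, Y_j, Z_{j+1}$ are forced to three distinct colors (equivalently, the allowed set for $Q_j$ is empty), so a fourth color is required. The only divergence is the upper bound, where you invoke $\chi(\mathbb{Q}^3,\sqrt{t}) \leq 4$ from the literature while the paper deletes the degree-$3$ vertices and reduces to $C_5$; both are valid, and yours is arguably cleaner.
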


\begin{proof} To see that $G$ admits a proper 4-coloring, assume to the contrary that $\chi(G) > 4$, which means that none of the twenty vertices of $G$ having degree 3 are critical.  A new graph $G'$ can be formed by deleting each of these twenty vertices from $G$, and we have that $\chi(G') = \chi(G)$.  However, $G'$ is isomorphic to $C_5$ and is thus 3-colorable.

Consider a copy of the cycle $C_5$, which has been properly 3-colored, say with colors red, green, and blue.  Excluding symmetries and permutations of the colors, such a coloring is unique, and if the vertices of the 5-cycle have been labeled $v_0, \ldots, v_4$ in the usual fashion, observe that for some $i \in \{0, \ldots, 4\}$, the set $\{v_{i-1}, v_i, v_{i+1}\}$ will consist of a red vertex that is adjacent to blue and green vertices, a blue vertex that is adjacent to red and green vertices, and a green vertex that is adjacent to red and blue vertices.  For that same $i$, it follows that in any proper 3-coloring of $G$, the vertices $X_{i-1}, Y_{i}, Z_{i+1}$ will each receive different colors, and in turn, a fourth color is required for vertex $Q_i$.\qed   
\end{proof}

Regarding Step 1, we have been unable to show that the cycle $C_5$ is a subgraph of $G(\mathbb{Q}^3, \sqrt{t})$ for all $t \in T$, nor can we find any result in the literature indicating so.  The existence of a 5-cycle in each $G(\mathbb{Q}^3, \sqrt{t})$ is asked in \cite{bau}, and we will pose the question here as well.

\begin{question} \label{5cyclequestion} For each $t \in T$, does the graph $G(\mathbb{Q}^3, \sqrt{t})$ have $C_5$ as a subgraph?
\end{question}

In \cite{gaston}, an \textit{odd vector cycle} is defined as a collection of an odd number of $\mathbb{Z}^m$ vectors, each of a given magnitude $\sqrt{r}$, whose sum equals the zero vector.  Given positive integer $r$ for which an odd vector cycle actually exists, the function $C_m(r)$ is defined in \cite{gaston} as the minimum possible number of vectors in the collection, and $C_3(r)$ is determined via computer search for all $r < 10^6$.  Due to the results of this search, we have that $G(\mathbb{Q}^3, \sqrt{t})$ has $C_5$ as a subgraph for all $t \in T$ satisfying $t < 10^6$.  This leads us to strongly believe that Question \ref{5cyclequestion} has a positive answer.  In practice, 5-cycles have been quite easy to find in $G(\mathbb{Q}^3, \sqrt{t})$, and to execute Step 1, one may generate a large number of $\mathbb{Q}^3$ vectors, each of length $\sqrt{t}$, and then use some type of ``meet in the middle" algorithm to hopefully find five of them that sum to the zero vector.  Note also that the vectors should be selected so that no points $v_{i-1}, v_{i}, v_{i+1}$ are collinear, as that would mean that the circle $C_i$ is degenerate, consisting only of the point $v_{i}$ itself.

Step 4 is straightforward to execute in light of the following Theorem \ref{nagellparameter}, which is seen with proof in \cite{nagell}.

\begin{theorem} \label{nagellparameter} Let $ax^2 + bxy + cy^2 + dx + ey + f = 0$ be the equation of a conic where $a, b, c, d, e, f \in \mathbb{Q}$.  Suppose $(\xi, \eta)$ is a rational point on the conic.  Additional rational points $(x,y)$ on the conic have the following parameterization where $s$ runs through all the rational numbers.\\
\begin{center}
$\ds x = \frac{-d - a\xi -b\eta -(2c\eta + e)s + c\xi s^2}{a + bs + cs^2}$, \, $\ds y = \frac{a\eta - (2a\xi + d)s - (b\xi + c\eta + e)s^2}{a + bs + cs^2}$\\
\end{center}
\noindent The only rational point not obtained through this parameterization (should it actually exist on the conic) is the point $(\xi, \frac{-b\xi - c\eta - e}{c})$ and is found by letting $s$ approach $\infty$.
\end{theorem}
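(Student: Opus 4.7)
The plan is to use the classical chord method: given the rational point $(\xi, \eta)$ on the conic, I parameterize other rational points by drawing lines through $(\xi, \eta)$ with rational slope $s$. For each such slope, the line $y = \eta + s(x - \xi)$ meets the conic in at most one additional point, and by a Vieta's-formula argument, rationality of $s$ forces the second intersection to be rational as well. Conversely, any other rational point $(x_0, y_0)$ on the conic lies on a unique line through $(\xi, \eta)$, and its slope $(y_0 - \eta)/(x_0 - \xi)$ is rational provided $x_0 \neq \xi$; the vertical-line case becomes the exceptional point.

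First I would substitute $y = \eta + s(x - \xi)$ into $ax^2 + bxy + cy^2 + dx + ey + f = 0$ and collect powers of $x$, obtaining a quadratic $A(s)\, x^2 + B(s)\, x + C(s) = 0$ with $A(s) = a + bs + cs^2$ and $B(s), C(s)$ polynomial in $s$ with coefficients rational in $a,b,c,d,e,f,\xi,\eta$. Because $(\xi, \eta)$ satisfies both the line equation (trivially) and the conic equation, $x = \xi$ is automatically one root. Vieta's formula then gives the other root as $x = -\xi - B(s)/A(s)$, and a direct simplification, using the relation $a\xi^2 + b\xi\eta + c\eta^2 + d\xi + e\eta + f = 0$ to absorb the constant term, yields exactly the stated expression for $x$. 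Substituting back into $y = \eta + s(x - \xi)$ and simplifying similarly produces the stated formula for $y$.

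To show that every rational point except possibly one is captured, I would argue that given any rational $(x_0, y_0) \neq (\xi, \eta)$ on the conic with $x_0 \neq \xi$, the slope $s_0 = (y_0 - \eta)/(x_0 - \xi)$ is rational, and plugging $s = s_0$ into the parameterization recovers $(x_0, y_0)$ uniquely. The remaining rational point, if one exists, sits on the vertical line $x = \xi$; substituting $x = \xi$ into the conic yields a quadratic in $y$ with leading coefficient $c$, one of whose roots is $\eta$, and Vieta's formula identifies the other root as $y^* = -(b\xi + e)/c - \eta = (-b\xi - c\eta - e)/c$. Letting $s \to \infty$ in the parameterization sends $(x,y)$ to $(\xi, y^*)$, confirming its status as the limiting exceptional point.

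The main obstacle is not conceptual but bookkeeping: the algebraic simplification of $-\xi - B(s)/A(s)$ into the given closed form requires careful cancellation against the constant term using the relation that $(\xi,\eta)$ lies on the conic, and similarly for $y$. A secondary subtlety arises when $A(s_0) = 0$ for some rational $s_0$: the substituted equation degenerates to linear and the corresponding line meets the conic only at $(\xi, \eta)$ itself, so no new point is produced, which is perfectly consistent with the parameterization (the formula for $x$ becomes $0/0$ at such $s_0$, with the numerator also vanishing thanks to the conic relation). Handling this case along with the $s \to \infty$ limit completes the proof.
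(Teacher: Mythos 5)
Your proposal is correct: the chord method through $(\xi,\eta)$ with rational slope $s$ (one checks directly that the displayed formulas satisfy $y-\eta = s(x-\xi)$), Vieta's formulas for the second intersection, and the vertical line $x=\xi$ accounting for the exceptional point is exactly the classical argument of Nagell, which is the proof the paper defers to by citation rather than reproving. No substantive differences to report.
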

\vspace{15pt}

The application of Theorem \ref{nagellparameter} to parameterize the rational points on each circle $C_i$ is as follows.  For each $i \in \{0, \ldots, 4\}$, let $\mathcal{P}_i$ be the plane containing $C_i$, and designate $M_i$ to be the midpoint of points $v_{i-1}$ and $v_{i+1}$.  Since $\mathcal{P}_i$ has normal vector $v_{i+1} - v_{i-1}$ and contains $M_i$, we have that $\mathcal{P}_i$ is given by an equation of the form $\alpha_ix + \beta_iy + \gamma_iz = w_i$ for some $\alpha_i,\beta_i,\gamma_i,w_i \in \mathbb{Q}$.  Letting $v_{i-1} = (x_{i-1}, y_{i-1}, z_{i-1})$, since any point on $C_i$ is at distance $\sqrt{t}$ from $v_{i-1}$, we have that $(x,y,z) \in C_i$ satisfies $(x - x_{i-1})^2 + (y - y_{i-1})^2 + (z - z_{i-1})^2 = t$.  At least one of $\alpha_i,\beta_i,\gamma_i$ is non-zero, so without loss of generality assume that $\gamma_i \neq 0$ and substitute to rewrite the preceding equation as $(x - x_{i-1})^2 + (y - y_{i-1})^2 + (\frac{\alpha_ix + \beta_iy}{-\gamma_i} - z_{i-1})^2 = t$ which is an equation of the form stipulated by Theorem \ref{nagellparameter}.  Furthermore, since $v_i$ is a point on circle $C_i$, this equation has solution $x = x_i, y = y_i$ where $v_i = (x_i, y_i, z_i)$.  Theorem \ref{nagellparameter} then allows us to parameterize the $(x,y)$ solutions to this equation in terms of a single rational parameter $s_i$, and we may then find an expression in $s_i$ for the $z$-coordinate of a point $(x,y,z) \in C_i$ by substituting those parameterizations for $x$ and $y$ back into the equation $\alpha_ix + \beta_iy + \gamma_iz = w_i$.       

Using this algorithm, we successfully found a $4$-chromatic subgraph of $G(\mathbb{Q}^3, \sqrt{t})$ for a few small values of $t$, namely, $t \in \{34,66\}$.  The vertex sets of these subgraphs are given in the Appendix.  Even though the number of successes is admittedly few, we feel that this is a promising strategy for an eventual resolution of Question \ref{mainquestion}, for the following reasons.

Consider for a moment three circles in $\mathbb{R}^3$, notated as $C_{\alpha}$, $C_{\beta}$, and $C_{\gamma}$, and suppose that rational points on these circles have been parameterized using Theorem \ref{nagellparameter} above, say, using rational parameters $s_{\alpha}$, $s_{\beta}$, and $s_{\gamma}$, respectively.  Given specific selections of $s_{\alpha}$, $s_{\beta}$, $s_{\gamma}$ which produce points $P_{\alpha}$, $P_{\beta}$, and $P_{\gamma}$ on the corresponding circles, the circumcenter $P_0$ of the triangle with vertices $P_{\alpha}$, $P_{\beta}$, $P_{\gamma}$ is guaranteed to be a point of $\mathbb{Q}^3$.  As well, there exists a vector $n = \langle q_1, q_2, q_3 \rangle$ normal to the plane containing $P_{\alpha}$, $P_{\beta}$, $P_{\gamma}$ where each of $q_1, q_2, q_3$ are rational.  Let $\ell$ be the set of all $\mathbb{R}^3$ points equidistant from $P_{\alpha}$, $P_{\beta}$, and $P_{\gamma}$, or, in other words, the line which is parallel to vector $n$ and passes through the point $P_0$.  There are infinitely many rational points on $\ell$ and they are of the form $(q_1s + x_0, q_2s + y_0, q_3s + z_0)$ where $s \in \mathbb{Q}$ and $P_0 = (x_0,y_0,z_0)$.  If the circumradius $r$ of the triangle with vertices $P_{\alpha}$, $P_{\beta}$, $P_{\gamma}$ satisfies $r \leq \sqrt{t}$, there is a point $Q$ on $\ell$ at distance $\sqrt{t}$ from $P_{\alpha}$, $P_{\beta}$, $P_{\gamma}$.  Applying the Pythagorean Theorem and consulting Figure \ref{righttriangle} below, one can see that $Q$ is given when $r^2 + (q_1s)^2 + (q_2s)^2 + (q_3s)^2 = t$.  For $Q$ to be a point of  $\mathbb{Q}^3$, all we need is for $s^2 = \frac{t - r^2}{{q_1}^2 + {q_2}^2 + {q_3}^2}$ to be a perfect rational square.  This is of course a Diophantine equation, and it is our hope that it can be attacked via techniques from classical number theory.  Unfortunately, the expressions for $q_1$, $q_2$, $q_3$, and $r$ in terms of rational parameters $s_{\alpha}$, $s_{\beta}$, $s_{\gamma}$ are extremely unwieldy, and we see no method of guaranteeing a selection of $s_{\alpha}$, $s_{\beta}$, $s_{\gamma}$ that will result in the equation being solvable.  As such, we were forced to fall back on Step 5 as given in the algorithm, creating a list of rational inputs, and then just hoping to get lucky.  

\begin{figure}[h]%
\begin{center}
\includegraphics[scale=1.1]{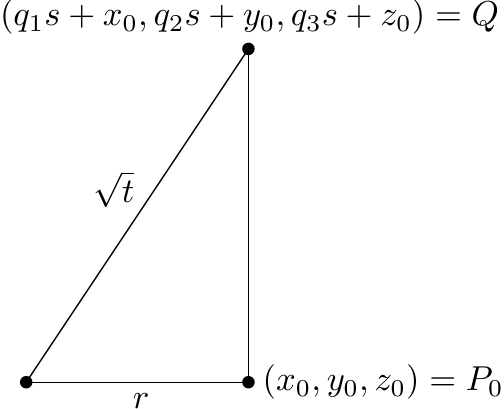}%
\caption{}%
\label{righttriangle}%
\end{center}
\end{figure}

\section{Finding a Gr\"{o}tzsch Subgraph}

In this section, we will describe another search method that we have employed to show that $\chi(\mathbb{Q}^3, \sqrt{t}) = 4$ for some $t \in T$.  It is a more formal extension of that used in \cite{noble1} to show that $\chi(\mathbb{Q}^3, \sqrt{10}) = 4$.  Consider the graph $H$ in Figure \ref{grotzschsubgraph} and note that $H$ is a subgraph of the Gr\"{o}tzsch graph, formed by deleting the vertex labeled $y_2$ in Figure \ref{grotzsch}.

\begin{figure}[h]%
\begin{center}
\includegraphics[scale=.6]{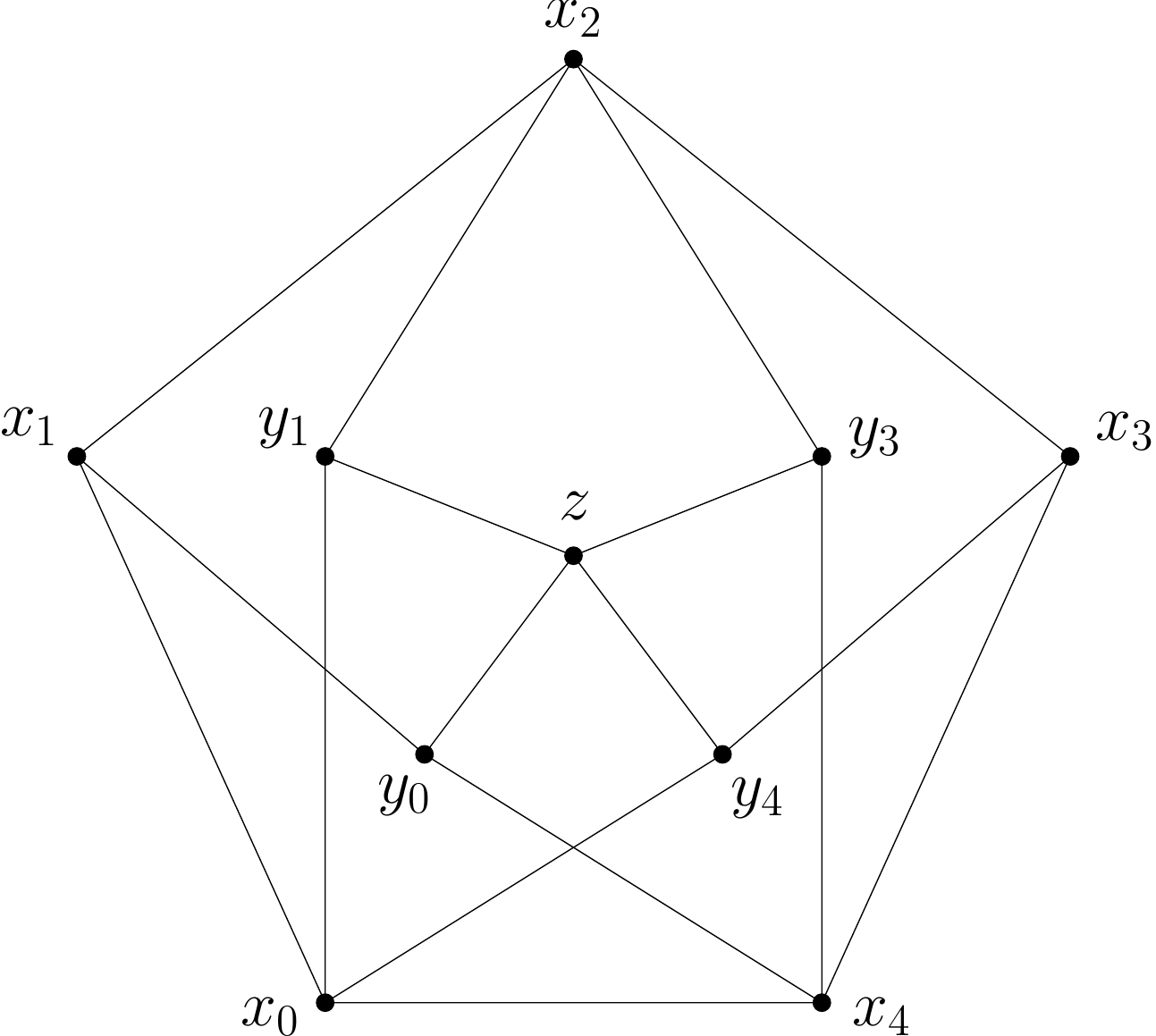}%
\caption{}%
\label{grotzschsubgraph}%
\end{center}
\end{figure}

Since the Gr\"{o}tzsch graph $G$ is 4-critical and $H = G \setminus \{y_2\}$, we have that $\chi(H) = 3$.  However, in any proper 3-coloring of $H$, the vertices labeled $x_2$ and $z$ must receive the same color.  This is fairly easy to see, as the criticality of $y_2$ in $G$ implies that $x_1, x_3, z$ must receive different colors in any proper 3-coloring of $H$, and since $x_2$ is adjacent to both $x_1$ and $x_3$, it must be that $x_2$ and $z$ are colored the same.  This observation was utilized in \cite{noble1} to eventually show that $\chi(\mathbb{Q}^3, \sqrt{10}) = 4$.  Here, however, we will also take advantage of the fact that $x_1, x_3$ must be colored differently. 

The following two theorems are central to the success of our search algorithm, and for a proof of Theorem \ref{rationalrotation}, see \cite{bau}.  The overall goal will be to construct a copy of $H$ appearing as a subgraph of the desired $G(\mathbb{Q}^3, \sqrt{t})$, and then assume to the contrary that $\chi(\mathbb{Q}^3, \sqrt{t}) = 3$.  Theorem \ref{rationalrotation} implies that not only must the points $x_2$ and $z$ receive the same color in a proper 3-coloring of $G(\mathbb{Q}^3, \sqrt{t})$, any $\mathbb{Q}^3$ vector of length $|x_2 - z|$ must have its initial and terminal points colored the same color.  As well, since $x_1$ and $x_3$ must be colored differently, the circle $S$ consisting of all points simultaneously at distance $\sqrt{t}$ from each of $x_1$ and $x_3$ must have all of its rational points colored the same color.  If $v$ is a vector with initial and terminal point being rational points $S$, Theorem \ref{rationalrotation} gives that any $\mathbb{Q}^3$ vector of length $|v|$ must have its initial and terminal points colored the same color.  If $|v|$ or $|x_2 - z|$ happen to fit the criteria given in Theorem \ref{vectorlength}, then in this supposed proper 3-coloring of $\mathbb{Q}^3$, there exists a finite sequence of points, each colored the same color, with the first and last points of the sequence being distance $\sqrt{t}$ apart.

\begin{theorem} \label{rationalrotation} Let $n \geq 1$ and $v_1, v_2 \in \mathbb{Q}^n$ with $|v_1| = |v_2|$.  There exists an isometry $\varphi: \mathbb{Q}^n \to \mathbb{Q}^n$ such that $\varphi(v_1) = v_2$.
\end{theorem}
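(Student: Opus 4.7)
The plan is to construct $\varphi$ explicitly as a Householder reflection. If $v_1 = v_2$, simply take $\varphi$ to be the identity. Otherwise, set $u = v_1 - v_2 \in \mathbb{Q}^n \setminus \{0\}$ and define
$$\varphi(x) = x - \frac{2\langle x, u\rangle}{|u|^2}\,u,$$
the orthogonal reflection across the hyperplane through the origin perpendicular to $u$. The first step I would carry out is to confirm that this map is legitimate in the rational setting: since $u$ has rational entries and $|u|^2$ is a nonzero rational (nonzero because $u \neq 0$), the map $\varphi$ carries $\mathbb{Q}^n$ into $\mathbb{Q}^n$. That $\varphi$ is an isometry of $\mathbb{R}^n$ is the standard inner-product calculation for Householder reflections, so $\varphi$ restricts to a bijective distance-preserving map $\mathbb{Q}^n \to \mathbb{Q}^n$.

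The second step is to verify $\varphi(v_1) = v_2$, and this is the only place the hypothesis $|v_1| = |v_2|$ enters. Using it, I would expand
$$|u|^2 \;=\; |v_1|^2 - 2\langle v_1, v_2\rangle + |v_2|^2 \;=\; 2\bigl(|v_1|^2 - \langle v_1, v_2\rangle\bigr),$$
while $\langle v_1, u\rangle = |v_1|^2 - \langle v_1, v_2\rangle$. The scalar $2\langle v_1, u\rangle/|u|^2$ therefore collapses to exactly $1$, yielding $\varphi(v_1) = v_1 - u = v_2$.

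There is no genuine obstacle in this argument; the proof is essentially a two-line Householder reflection calculation. The only delicate point is ensuring that the denominator $|u|^2$ does not vanish, which is precisely why I would split off the case $v_1 = v_2$ at the outset. (Equivalently, one could avoid this case distinction by first applying Cauchy--Schwarz to observe that $|v_1|=|v_2|$ forces $\langle v_1, v_2\rangle = |v_1|^2$ if and only if $v_1 = v_2$, but the up-front split is cleaner.)
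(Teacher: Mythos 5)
Your proof is correct: the Householder reflection $\varphi(x) = x - \frac{2\langle x, u\rangle}{|u|^2}u$ with $u = v_1 - v_2$ is rational-valued, is an isometry, and sends $v_1$ to $v_2$ precisely because $|v_1| = |v_2|$ makes the scalar collapse to $1$, with the $v_1 = v_2$ case handled separately. The paper itself gives no proof of this theorem, deferring instead to the cited reference of Bau, Johnson, and Noble, where the standard argument is exactly this reflection construction, so your approach matches the intended one.
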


\begin{theorem} \label{vectorlength} Let $v \in \mathbb{Q}^3$ with $|v| = \sqrt{t}$ for some $t \in T$, and suppose $h = \frac{m}{n}$ for $m,n \in \mathbb{Z}^+$ with $\gcd(m,n) = 1$.  Let $S$ be the set of all $\mathbb{Q}^3$ vectors of length $\sqrt{h}$, and let $\Phi(S)$ be the set of all vectors generated by those of $S$ under the usual vector addition.  If any of the following hold, then $v \in \Phi(S)$.
\begin{itemize}
\item $m \equiv 2 \pmod 4$.
\item The square-free part $n_0$ of $n$ is even.
\item The square-free part $n_0$ of $n$ is odd and $mn_0 \equiv 1 \pmod 4$.
\end{itemize}
\end{theorem}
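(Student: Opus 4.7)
The plan is to exploit rational-rotation symmetry to reduce to a single representative, verify that length-$\sqrt{h}$ rational vectors exist under each hypothesis, and then construct an explicit decomposition.

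First, every orthogonal transformation $\varphi : \mathbb{Q}^3 \to \mathbb{Q}^3$ preserves $S$ (it maps $\mathbb{Q}^3$ into itself and preserves length), and therefore stabilizes $\Phi(S)$. By Theorem \ref{rationalrotation}, such transformations act transitively on the length-$\sqrt{t}$ rational vectors, so whether $v \in \Phi(S)$ depends only on $|v|$, and it suffices to exhibit a single length-$\sqrt{t}$ vector in $\Phi(S)$. Since $t \equiv 2 \pmod 4$ is squarefree, Legendre's three-squares theorem supplies $v_0 = (a_0, b_0, c_0) \in \mathbb{Z}^3$ with $a_0^2 + b_0^2 + c_0^2 = t$, which I would take as the representative.

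Next I would check $S \neq \emptyset$ under each hypothesis. Clearing denominators, a length-$\sqrt{h}$ rational vector exists iff $mn$ is a sum of three integer squares, which by Legendre means $mn \neq 4^a(8b+7)$. A $2$-adic check handles all three cases. The first hypothesis forces $n$ odd (since $\gcd(m,n)=1$) and $v_2(mn) = 1$; the second forces $m$ odd and, writing $n = n_0 n_1^2$, gives $v_2(mn) = v_2(n) = 1 + 2v_2(n_1)$ odd; the third gives $v_2(mn)$ even with odd part congruent to $mn_0 \pmod 8 \in \{1,5\}$. In every case $mn = x^2 + y^2 + z^2$ for some integers, producing $(x/n, y/n, z/n) \in S$.

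The main content of the proof is then expressing $v_0$ as a finite sum of elements of $S$. Since $\Phi(S)$ is closed under integer multiplication of its elements and under the rational rotation group, every rational vector of squared length $k^2 h$ for $k \in \mathbb{Z}^+$ lies in $\Phi(S)$ for free. A natural approach is to write $v_0 = u_1 + u_2$ with $u_1 \perp u_2$ and each $u_i \in \Phi(S)$: fixing $r_1 + r_2 = t$ with $r_1, r_2$ in the squared-length set of $\Phi(S)$, this reduces to finding a rational point on the circle $\{\,|u| = \sqrt{r_1}\,\} \cap \{\,u \cdot v_0 = r_1\,\}$, which in turn is a binary quadratic form representability question on $v_0^\perp \cap \mathbb{Q}^3$. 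The three hypotheses on $(m,n)$ appear designed so that the relevant local obstructions at $2$ (and at odd primes dividing $n$) vanish, allowing the binary form to represent what is needed. Two-vector decompositions might fail when $h < t/4$, in which case I would instead work with three- or four-vector decompositions, but the same local-global flavor should apply.

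The main obstacle is precisely this case-by-case verification of the intermediate binary-form representability; this is essentially a careful local-global analysis at the prime $2$, and it is what forces the theorem statement to split into three technical congruence cases rather than admit a single unified hypothesis.
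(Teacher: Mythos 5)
Your opening reduction is fine (rational orthogonal maps preserve $S$, hence $\Phi(S)$, so by Theorem \ref{rationalrotation} it suffices to handle one integer representative $v_0$ with $|v_0|^2 = t$), and your $2$-adic verification that $S \neq \emptyset$ in each case is correct. But the proof stops exactly where the theorem's content begins. Knowing that $S$ is non-empty and that $v_0$ exists does not place $v_0$ in $\Phi(S)$, and your proposed bridge --- an orthogonal splitting $v_0 = u_1 + u_2$ with $|u_i|^2$ in the squared-length set of $\Phi(S)$ --- is never verified and, as sketched, is unlikely to work: the only squared lengths you have established ``for free'' are $k^2h$, so a two-term orthogonal decomposition forces $t = (k_1^2 + k_2^2)h$, i.e.\ $tn/m$ a sum of two squares, which the hypotheses in no way guarantee (the difficulty is not merely the size condition $h < t/4$ you flag). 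The fallback to three- or four-term decompositions ``with the same local-global flavor'' is not an argument, and you explicitly concede that the case-by-case representability analysis --- the entire substance of the theorem --- remains to be done. A plan whose central step is acknowledged as an open obstacle is a gap, not a proof.

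The missing idea is in fact elementary and quite different from a binary-form local-global analysis. Take a \emph{primitive} Legendre representation $a^2 + b^2 + c^2 = mn$ (or $mn_0$ after stripping the factor $4^{\alpha}$ in the later cases), so that $(\frac{a}{n},\frac{b}{n},\frac{c}{n}) \in S$ and hence $(a,b,c) \in \Phi(S)$. Since $S$ is closed under permuting coordinates and flipping signs, so is $\Phi(S)$, whence $(a,b,c) + (a,-b,-c) = (2a,0,0) \in \Phi(S)$, and likewise $(2b,0,0),(2c,0,0)$; then $\gcd(a,b,c)=1$ and B\'ezout give $(\pm 2,0,0),(0,\pm 2,0),(0,0,\pm 2) \in \Phi(S)$. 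The three congruence hypotheses are there precisely to control parities: when $mn \equiv 2 \pmod 4$ the vector $(a,b,c)$ has exactly two odd entries, matching the parity pattern of $v_0$ (which has exactly two odd entries because $t \equiv 2 \pmod 4$), so $v_0$ is reached from $(a,b,c)$ by adding even axis vectors; when $mn_0 \equiv 1 \pmod 4$ one entry is odd and the same manipulation yields $(1,0,0) \in \Phi(S)$, after which everything follows. None of this parity/gcd mechanism appears in your proposal, and without it (or a completed version of your local-global computation) the statement is not proved.
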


\begin{proof} Throughout, we will use the elementary fact that $x^2$ is congruent to $0$ or $1$ modulo $4$ for all integers $x$.  Let $v = (x_0,y_0,z_0)$ and since $t \equiv 2 \pmod 4$, we have that exactly two of $x_0,y_0,z_0$ are odd.  

First, assume that $m \equiv 2 \pmod 4$.  By a classical result of Legendre, there exist $a,b,c \in \mathbb{Z}$ such that $a^2 + b^2 + c^2 = mn$ and $\gcd(a,b,c) = 1$.  Note that $mn \equiv 2 \pmod 4$ implies that exactly two of $a,b,c$ are odd.  We have $|(\frac{a}{n}, \frac{b}{n}, \frac{c}{n})| = \sqrt{h}$, and with $\Phi(S)$ closed under addition, $(a,b,c) \in \Phi(S)$.  Note that $\Phi(S)$ is closed under the operations of permuting coordinate entries of a vector or replacing any of those entries with their corresponding negatives, and as such, $(a,b,c) + (a,-b,-c) = (2a,0,0) \in \Phi(S)$, and similarly,  $(2b,0,0), (2c,0,0) \in \Phi(S)$ as well.  Since $\gcd(a,b,c) = 1$, there exist integers $d_1, d_2, d_3$ such that $d_1(2a,0,0) + d_2(2b,0,0) + d_3(2c,0,0) = (2,0,0)$.  Having now established that $(\pm 2,0,0), (0, \pm 2, 0), (0,0, \pm 2) \in \Phi(S)$, we may start with $(a,b,c)$ and repeatedly add to it some number of copies of the vectors $(\pm 2,0,0)$, $(0, \pm 2, 0)$, $(0,0, \pm 2)$ to construct $v$.

Now assume instead that the square-free part $n_0$ of $n$ is even.  Write $n = 4^\alpha n_0$ where $n_0 \equiv 2 \pmod 4$ and let $a^2 + b^2 + c^2 = 4^\alpha n_0m$.  This implies that $2^\alpha$ divides each of $a,b,c$, so write $a = 2^\alpha a_0$, $b = 2^\alpha b_0$, $c = 2^\alpha c_0$, and after cancellation, we have ${a_0}^2 + {b_0}^2 + {c_0}^2 = n_0m$.  By Legendre's result, $a_0, b_0, c_0$ can be assumed to be relatively prime, and the argument precedes just as in the previous case.

Finally, assume that the square-free part $n_0$ of $n$ is odd and that $mn_0 \equiv 1 \pmod 4$.  Just as above, we may write $mn = 4^\alpha n_0m$ and then let ${a_0}^2 + {b_0}^2 + {c_0}^2 = mn_0$ where $\gcd(a_0, b_0, c_0) = 1$.  As $mn_0 \equiv 1 \pmod 4$, exactly one of $a_0, b_0, c_0$ is odd.  We may repeat the argument from the first case to obtain the fact that $(\pm 2,0,0), (0, \pm 2, 0), (0,0, \pm 2) \in \Phi(S)$.  Without loss of generality, assume that $a_0$ is odd, and repeatedly add some number of the vectors $(\pm 2,0,0), (0, \pm 2, 0), (0,0, \pm 2)$ to $(a_0, b_0, c_0)$ to obtain $(1,0,0) \in \Phi(S)$.  It immediately follows that $v \in \Phi(S)$.\qed
\end{proof}

Our search algorithm is as follows. 

\begin{enumerate}[label={\textbf{Step \arabic*:}},itemindent=1.5em,itemsep=5pt]

\item Find a symmetric 5-cycle in $G(\mathbb{Q}^3, \sqrt{t})$.  By this, we mean vertices $x_0, \ldots, x_4 \in \mathbb{Q}^3$ such that $|x_i - x_{i+1}| = \sqrt{t}$ for $i \in \{0, \ldots, 4\}$, and, letting $\mathcal{P}$ designate the plane consisting of all points equidistant from $x_0$ and $x_4$, we have $x_2$ and the midpoint of $x_1$, $x_3$ both lying in $\mathcal{P}$.

\item For $i \in \{0, 1, 3, 4\}$, let $C_i$ be the circle in $\mathbb{R}^3$ consisting of all points simultaneously at distance $\sqrt{t}$ from each of the vertices $x_{i-1}$ and $x_{i+1}$.

\item  For $i \in \{0, 1\}$, parameterize the points $C_i \cap \mathbb{Q}^3$ in terms of a single rational parameter $t_i$.

\item Form a suitably large collection of rational numbers, and let $\mathscr{L}$ be the set of all ordered pairs with entries from this collection. 

\item Perform the following procedure.
		\begin{itemize}
		\item Select $(a,b) \in \mathscr{L}$.  
		\item Select rational points $y_0 \in C_0$, $y_1 \in C_1$ by plugging $a = t_0, b = t_1$ into the corresponding parameterizations given in Step 3.
		\item Let $C$ be the set of all points simultaneously at distance $\sqrt{t}$ from $y_0, y_1$, and determine a point of intersection (should it exist) of $C$ and $\mathcal{P}$.  If this point of intersection is in $\mathbb{Q}^3$, call it $z$ and stop.  Otherwise, repeat this step with a new ordered pair from $\mathscr{L}$.  
		\end{itemize}
		
\item Determine the distance $|x_2 - z|$.  If it meets the criteria given in Theorem \ref{vectorlength}, stop.  If $|x_2 - z|$ does not fit the criteria given in Theorem \ref{vectorlength}, denote by $S$ the circle consisting of all points simultaneously at distance $\sqrt{t}$ from each of $x_1$ and $x_3$, and let $\sqrt{q}$ be the radius of $S$ for some $q \in \mathbb{Q}$.  If $q$ is of the form $q = \frac{m}{n}$ where $\gcd(m,n) = 1$ and $n \equiv 2 \pmod 4$, stop.  Otherwise, repeat the previous step.
\end{enumerate} 

Some elaboration is needed.  The reason we require $x_0, \ldots, x_4$ to be the vertices of a symmetric 5-cycle is that, once $z \in \mathcal{P}$ has been found, we are guaranteed that there exist rational points $y_3 \in C_3, y_4 \in C_4$ with $|y_3 - z| = |y_4 - z| = \sqrt{t}$.  Regarding Step 6, if the radius of $S$ is indeed of the form $\sqrt{\frac{m}{n}}$ with $n = 2p$ for some odd integer $p$, then antipodal points of $S$ are distance $\sqrt{\frac{2m}{p}}$ apart.  We have $\sqrt{\frac{2m}{p}}$ adhering to the conditions given in Theorem \ref{vectorlength}, and it follows that $\chi(\mathbb{Q}^3, \sqrt{t}) = 4$.  It should also be said that, unfortunately, if the radius of $S$ is not of this form, there do not exist rational points $p_1, p_2 \in S$ where $|p_1 - p_2|$ fits the criteria of Theorem \ref{vectorlength}.  We omit proof of this fact. 

As one might expect, especially in light of Question \ref{5cyclequestion} from the previous section, we have been unable to supply proof that a symmetric 5-cycle exists in $G(\mathbb{Q}^3, \sqrt{t})$ for all $t \in T$.  We will pose that as Question \ref{symmetric5cyclequestion} below, where, of course, a positive answer to Question \ref{symmetric5cyclequestion} would imply a positive answer to Question \ref{5cyclequestion} as well.

\begin{question} \label{symmetric5cyclequestion} For each $t \in T$, does the graph $G(\mathbb{Q}^3, \sqrt{t})$ have a symmetric 5-cycle?
\end{question}

In \cite{noble2}, the second author obtains a characterization of the possible side lengths of isosceles triangles which may be oriented so that their vertices are points of $\mathbb{Q}^3$.  It is given as Theorem \ref{isosceles}, and it offers a possible avenue of attack for answering Question \ref{symmetric5cyclequestion}.

\begin{theorem} \label{isosceles} Let $r,d \in \mathbb{Q}^+$ where $\sqrt{r}$ and $\sqrt{d}$ are both realized as distances in $\mathbb{Q}^3$.  Let $r = a^2 + b^2 + c^2$ for $a,b,c \in \mathbb{Q}$ with $a,b$ not both equal to zero.  Then the triangle $T(\sqrt{r}, \sqrt{d}, \sqrt{d})$ is embeddable in $\mathbb{Q}^3$ if and only if the Diophantine equation $x^2 + ry^2 - (4d - r)(a^2 + b^2)z^2 = 0$ has a non-trivial integer solution.
\end{theorem}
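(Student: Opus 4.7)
The plan is to reduce the embeddability question to the existence of rational solutions of a system of two equations in three unknowns, then diagonalize that system so it matches the stated Diophantine equation. Place the endpoints of the base at $P_1 = (0,0,0)$ and $P_2 = (a,b,c)$; these are rational points at distance $\sqrt{r}$. The apex $P_3$ of an isosceles triangle with legs of length $\sqrt{d}$ must lie on the perpendicular bisector plane of $P_1 P_2$ at distance $\sqrt{d - r/4}$ from the midpoint $M = (a/2, b/2, c/2)$. Writing $P_3 - M = (u,v,w)$, rational embeddability is therefore equivalent to the existence of $u,v,w \in \mathbb{Q}$ satisfying $au+bv+cw = 0$ and $u^2+v^2+w^2 = (4d-r)/4$.

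By the hypothesis that $a,b$ are not both zero, and by the symmetry of the target equation in $a$ and $b$, I may assume $a \neq 0$. Solving the linear constraint for $u$ and substituting into the sphere equation, after multiplying by $a^2$, yields the conic
\[
(a^2+b^2)v^2 + 2bc\,vw + (a^2+c^2)w^2 = \tfrac{a^2(4d-r)}{4}.
\]
Completing the square in $v$ uses the identity $(a^2+b^2)(a^2+c^2) - b^2c^2 = a^2(a^2+b^2+c^2) = a^2 r$, and after multiplying through by $4(a^2+b^2)$ the equation diagonalizes as
\[
\bigl(2(a^2+b^2)v + 2bc\,w\bigr)^2 + r\,(2aw)^2 = a^2(a^2+b^2)(4d-r).
\]
Thus the existence of rational $(u,v,w)$ solving the two original conditions is equivalent to the existence of rationals $X,Y$ with $X^2 + rY^2 = a^2(a^2+b^2)(4d-r)$.

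For the forward direction, given such $(X,Y)$, setting $(x,y,z) = (X, Y, a)$ and clearing denominators produces a non-trivial integer solution to $x^2 + ry^2 - (4d-r)(a^2+b^2)z^2 = 0$, with non-triviality guaranteed by $z = a \neq 0$. For the converse, from any non-trivial integer triple $(x_0, y_0, z_0)$, positivity of $r$ forces $z_0 \neq 0$ (else $x_0 = y_0 = 0$). Then $X = a x_0 / z_0$ and $Y = a y_0 / z_0$ are rational and satisfy the required relation, so $w = Y/(2a)$, $v = (X - 2bc\,w)/(2(a^2+b^2))$, and $u = -(bv+cw)/a$ are all rational; reversing the algebra confirms that these $(u,v,w)$ solve both original conditions, so $P_3 = M + (u,v,w)$ is a rational apex, and the triangle is embedded.

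The main obstacle is really just bookkeeping: one has to commit to a variable to eliminate (here $u$, which is why the case distinction $a \neq 0$ vs.\ $b \neq 0$ matters) and then verify that the square-completion produces a form whose constants can be absorbed to match $x^2 + ry^2 = (4d-r)(a^2+b^2)z^2$ exactly. The clean separation hinges entirely on the identity $(a^2+b^2)(a^2+c^2) - b^2 c^2 = a^2 r$, which is what makes $r$ appear as the coefficient of $Y^2$ on the nose, and the residual factor of $a^2$ on the right-hand side is precisely what gets encoded as the $z^2$ in the final Diophantine equation.
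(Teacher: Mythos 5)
Your algebraic reduction is correct, and it is essentially the natural route to this statement; note, though, that this paper does not actually prove Theorem \ref{isosceles} -- it quotes it from \cite{noble2} -- so the comparison can only be against the way the result is set up and used here. I checked your computation: with base $(0,0,0)$, $(a,b,c)$, the apex conditions $au+bv+cw=0$ and $u^2+v^2+w^2=(4d-r)/4$, elimination of $u$, and the identity $(a^2+b^2)(a^2+c^2)-b^2c^2=a^2r$ do produce $\bigl(2(a^2+b^2)v+2bc\,w\bigr)^2+r\,(2aw)^2=a^2(a^2+b^2)(4d-r)$, the change of variables $(v,w)\mapsto(X,Y)$ is an invertible rational-linear map (using $a\neq 0$), and your passage between rational solutions of this equation and nontrivial integer solutions of $x^2+ry^2-(4d-r)(a^2+b^2)z^2=0$ (with $r>0$ forcing $z_0\neq 0$) is sound in both directions.

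The one step you assert rather than prove is the opening normalization ``place the endpoints of the base at $P_1=(0,0,0)$ and $P_2=(a,b,c)$.'' For the direction ``equation solvable $\Rightarrow$ embeddable'' this costs nothing, since exhibiting a rational apex over that particular base already witnesses embeddability. But for ``embeddable $\Rightarrow$ equation solvable'' you are handed an arbitrary rational embedding whose base segment need not be the segment from the origin to $(a,b,c)$, while the theorem fixes the representation $r=a^2+b^2+c^2$ in advance; you must therefore move the given base onto that segment by a translation followed by a rational isometry fixing lengths. That is precisely Theorem \ref{rationalrotation}, which the paper invokes for exactly this purpose in the sentence following Theorem \ref{isosceles}; cite it and the gap closes. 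The same remark tidies your reduction to $a\neq 0$: swapping the first two coordinates is a rational isometry, so embeddability relative to the base point $(a,b,c)$ and to $(b,a,c)$ are equivalent, matching the $a\leftrightarrow b$ symmetry of the target equation. With those two appeals made explicit, your proof is complete and matches the intended mechanism of the cited result.
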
 

In the original appearance of Theorem \ref{isosceles} in \cite{noble2}, $r, d$ were stipulated to be integers, but the above presentation holds as well.  Also, note that if one were given points $p_1, p_2 \in \mathbb{Q}^3$ with $|p_1 - p_2| = \sqrt{r}$, and were asked if there exists $p_3 \in \mathbb{Q}^3$ at distance $\sqrt{d}$ from each of $p_1, p_2$, it is not important where the $p_1, p_2$ are actually located.  This is a direct consequence of Theorem \ref{rationalrotation}, and all one needs to do to decide whether or not such a $p_3$ exists is to determine the solubility of the Diophantine equation in Theorem \ref{isosceles}.  With this in mind, consider Figure \ref{rd5cycle}.  The drawing is perhaps misleading as it depicts vertices $x_1$ and $x_3$ lying in the same plane as $x_0,x_2,x_4$, however, the exact placement of $x_1, x_3$ is not important.  For our means, all that really matters are the distances listed in Figure \ref{rd5cycle}.

\begin{figure}[h]%
\begin{center}
\includegraphics[scale=.5]{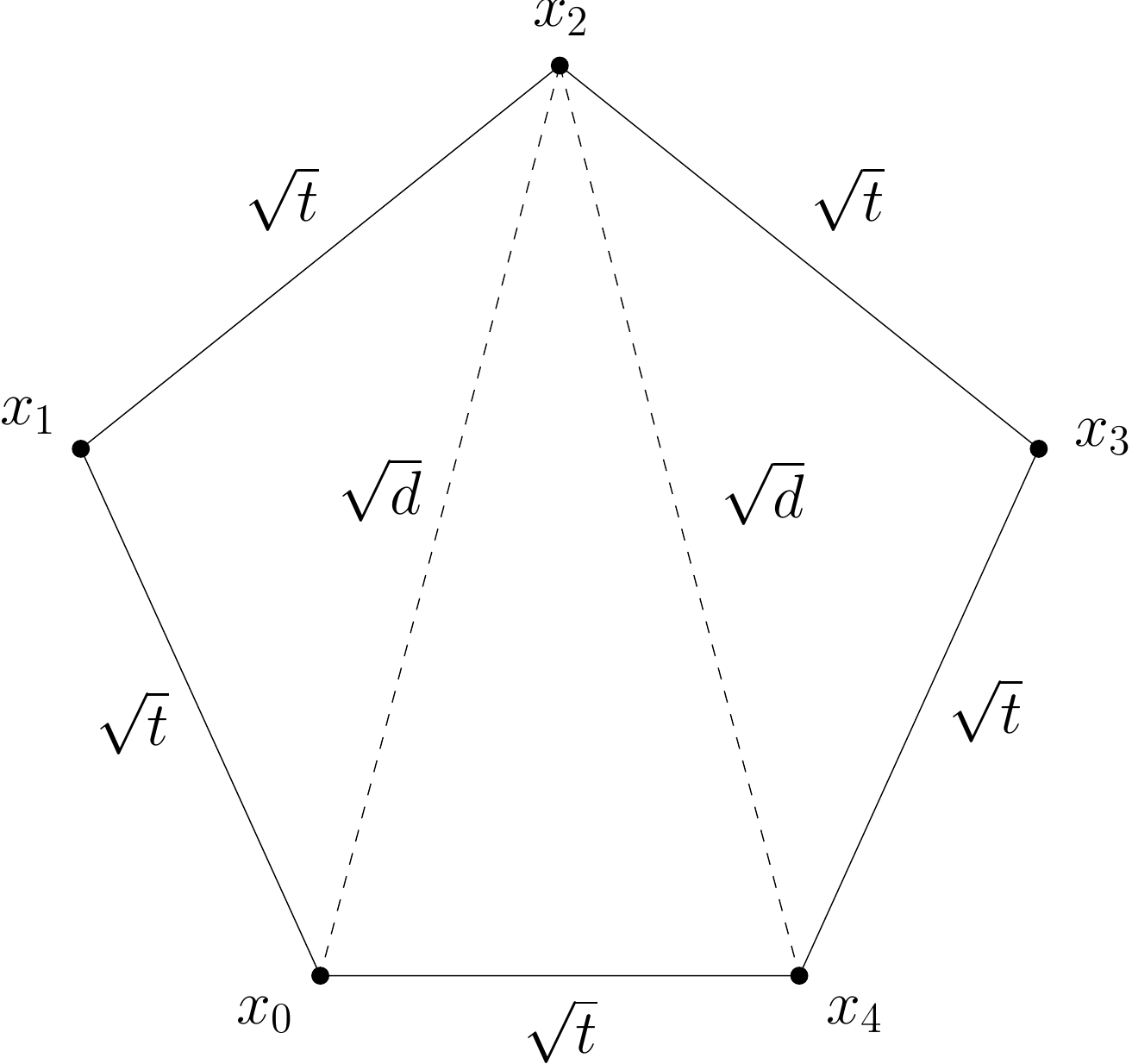}%
\caption{}%
\label{rd5cycle}%
\end{center}
\end{figure}

An arrangement of points $x_0, \ldots, x_4 \in \mathbb{Q}^3$ with distances between points consistent with Figure \ref{rd5cycle} is possible if and only if isosceles triangles with side lengths $\sqrt{t}, \sqrt{d}, \sqrt{d}$ and side lengths $\sqrt{d}, \sqrt{t}, \sqrt{t}$ are both embeddable in $\mathbb{Q}^3$.  Applying Theorem \ref{isosceles}, we have that both are embeddable in $\mathbb{Q}^3$ if and only if the following two Diophantine equations are solvable.  Here, $a_t, b_t, c_t$ are rationals such that ${a_t}^2 + {b_t}^2 + {c_t}^2 = t$ and $a_d, b_d, c_d$ are rationals such that ${a_d}^2 + {b_d}^2 + {c_d}^2 = d$.  Note also here that we are not requiring Equations \ref{eq1} and \ref{eq2} to have the \textit{same} solution $(x,y,z)$.  Rather, we just want each to have some non-trivial solution. 

\begin{equation}
x^2 + ty^2 - (4d - t)({a_t}^2 + {b_t}^2)z^2 = 0
\label{eq1}
\end{equation}

\begin{equation}
x^2 + dy^2 - (4t - d)({a_d}^2 + {b_d}^2)z^2 = 0
\label{eq2}
\end{equation}

As has been a frequent refrain in this article, we have been unable to determine if for every $t \in T$, there exists some corresponding $d \in \mathbb{Q}^+$ so that both Equation \ref{eq1} and Equation \ref{eq2} have a non-trivial solution in integers.  Experimental evidence suggests that there is such a $d$.  The authors constructed another algorithm employing Theorem \ref{legendretheorem}, which is also attributed to Legendre, and found that for all $t \in T$ with $t < 100,000$, there did in fact exist a corresponding $d \in \mathbb{Q}^+$ which resulted in Equations \ref{eq1} and \ref{eq2} simultaneously having a non-trivial solution.  More so, except for a few small values of $t$, we were able to find an integer for the desired $d$. 

\begin{theorem} \label{legendretheorem} Let $a, b, c$ be non-zero integers, not each positive or each negative, and suppose that $abc$ is square-free.  Then the equation
\begin{center} $ax^2 + by^2 + cz^2 = 0$
\end{center}
has a non-trivial integer solution $(x,y,z)$ if and only if each of the following are satisfied:

\begin{itemize} \item[$(i)$] $-ab$ is a quadratic residue of $c$
\item[$(ii)$] $-ac$ is a quadratic residue of $b$
\item[$(iii)$] $-bc$ is a quadratic residue of $a$.
\end{itemize}
\end{theorem}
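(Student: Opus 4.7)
The proof has a straightforward necessity direction and a substantially harder sufficiency direction, which I would handle separately. For necessity, let $(x,y,z)$ be a non-trivial integer solution; after dividing out by $\gcd(x,y,z)$, I may assume the solution is primitive. To derive condition (i), I first argue that $\gcd(y,c) = 1$: if a prime $p$ divided both, then square-freeness of $abc$ gives $p \nmid ab$, so reducing the equation modulo $p$ forces $p \mid x$; then $p^2 \mid ax^2 + by^2 = -cz^2$, and $p^2 \nmid c$ would force $p \mid z$, contradicting primitivity. Reducing modulo $c$ now gives $ax^2 \equiv -by^2 \pmod{c}$, and multiplying through by $a$ yields $(ax)^2 \equiv -ab \cdot y^2 \pmod{c}$; since $y$ is invertible modulo $c$, this shows that $-ab$ is a quadratic residue modulo $c$. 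Conditions (ii) and (iii) follow by the identical argument with the roles of $a,b,c$ permuted.

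For sufficiency, I would use Legendre's classical descent, noting first that square-freeness of $abc$ forces $a,b,c$ to be pairwise coprime. Arrange $|a| \leq |b| \leq |c|$. The base case is $|c|=1$, where the equation collapses to a two-variable form and the sign hypothesis together with the residue conditions yield an explicit solution. For the inductive step with $|c| > 1$, use condition (i) to choose an integer $T$ with $|T| \leq |c|/2$ and $T^2 \equiv -ab \pmod{c}$, and write $T^2 + ab = cM$. The two-squares-type identity
\begin{equation*}
(T^2 + ab)(aX^2 + bY^2) = a(TX - bY)^2 + b(TY + aX)^2
\end{equation*}
shows that any non-trivial solution of the reduced ternary form $aX^2 + bY^2 + M Z^2 = 0$ can be pushed back to a non-trivial solution of the original form $ax^2 + by^2 + cz^2 = 0$ via the substitution $x = TX - bY$, $y = TY + aX$, $z = MZ$. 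After extracting the square-free part $M_0$ of $M$, one verifies that the triple $(a, b, M_0)$ inherits the sign hypothesis and all three residue conditions, so the induction hypothesis delivers a solution of the reduced equation.

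The main obstacle is the fine bookkeeping in the descent step: verifying that the three residue conditions transfer correctly to the reduced triple $(a, b, M_0)$ requires careful accounting with quadratic reciprocity and pairwise coprimality, and one must also ensure the descent terminates by tracking a quantity (such as $|abM_0|$) that strictly decreases at each step. As a cleaner but less self-contained alternative, I would consider invoking the Hasse--Minkowski local-to-global principle for ternary quadratic forms, under which the three residue conditions together with the sign hypothesis are precisely the conditions for local solvability at each odd prime dividing $abc$ and at the archimedean place, making the theorem an immediate corollary.
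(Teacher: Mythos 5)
The paper never proves this statement: it is Legendre's classical criterion for ternary quadratic forms, quoted as a known theorem (the authors simply attribute it to Legendre and use it as a black box in Section 4), so there is no internal proof to measure yours against. Judged on its own, your necessity direction is complete and correct: after passing to a primitive solution, the argument that a common prime of $y$ and $c$ would force $p \mid x$ and then $p \mid z$ (using square-freeness of $c$) is exactly right, and the congruence $(ax)^2 \equiv -ab\,y^2 \pmod{c}$ with $y$ invertible gives $(i)$, with $(ii)$ and $(iii)$ by symmetry. Your sufficiency direction is the standard Legendre descent, and the algebra you quote is sound: the identity $(T^2+ab)(aX^2+bY^2) = a(TX-bY)^2 + b(TY+aX)^2$ does transport a non-trivial solution of $aX^2+bY^2+MZ^2=0$ to one of $ax^2+by^2+cz^2=0$ via $x=TX-bY$, $y=TY+aX$, $z=MZ$. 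But, as you yourself concede, the substance of the proof is precisely what is left open there: showing that the reduced triple $(a,b,M_0)$ again satisfies the sign condition and all three residue conditions, that the transported solution is non-trivial, and that $|M_0|<|c|$ (using $|T|\le |c|/2$ and $|a|\le|b|\le|c|$) so the descent terminates; none of that is routine bookkeeping that can be waved through. The Hasse--Minkowski alternative is legitimate, but ``immediate corollary'' overstates it slightly: the residue conditions handle the odd primes dividing $abc$ and the sign condition handles the real place, while solvability at primes not dividing $2abc$ comes from Chevalley--Warning plus Hensel, and solvability at $2$ must be extracted from the Hilbert symbol product formula rather than from the hypotheses directly. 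So your outline is the correct classical route, with the necessity half finished and the sufficiency half an accurate plan whose hard verifications remain to be written out.
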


Using the algorithm outlined in this section, we ultimately found the existence of a 4-chromatic subgraph of $G(\mathbb{Q}^3, \sqrt{30})$.  It is given in the Appendix.

\section{Concluding Thoughts}

It seems likely to the authors that, perhaps with some refinement, the methods presented in this paper (particularly those of Sections 3 and 4) could be implemented to search out $4$-chromatic subgraphs of $G(\mathbb{Q}^3, \sqrt{t})$ for many additional $t \in T$.  We have confidence that, should Question \ref{mainquestion} be eventually resolved, it will be answered in the negative.  We offer this as a conjecture below.\\

 \noindent\textbf{Conjecture} \,\,For any non-trivial graph $G(\mathbb{Q}^3, d)$, the chromatic number $\chi(\mathbb{Q}^3, d)$ is equal to 2 or 4.


\pagebreak

{\Large

\noindent \textbf{Appendix}\\

}

Figure \ref{28vertexgraph} depicts a $4$-chromatic subgraph of $G(\mathbb{Q}^3, \sqrt{22})$ found using the algorithm presented in Section 2.  Its chromatic number was obtained through use of a standard graph coloring program in Sage.  Note that in the figure below, the graph is not drawn as a Euclidean distance graph with all edges of equal length, as such a representation was quite difficult to visually comprehend, and we felt it did not add anything relevant to the discussion.

\begin{figure}[h]%
\begin{center}
\includegraphics[scale=.7]{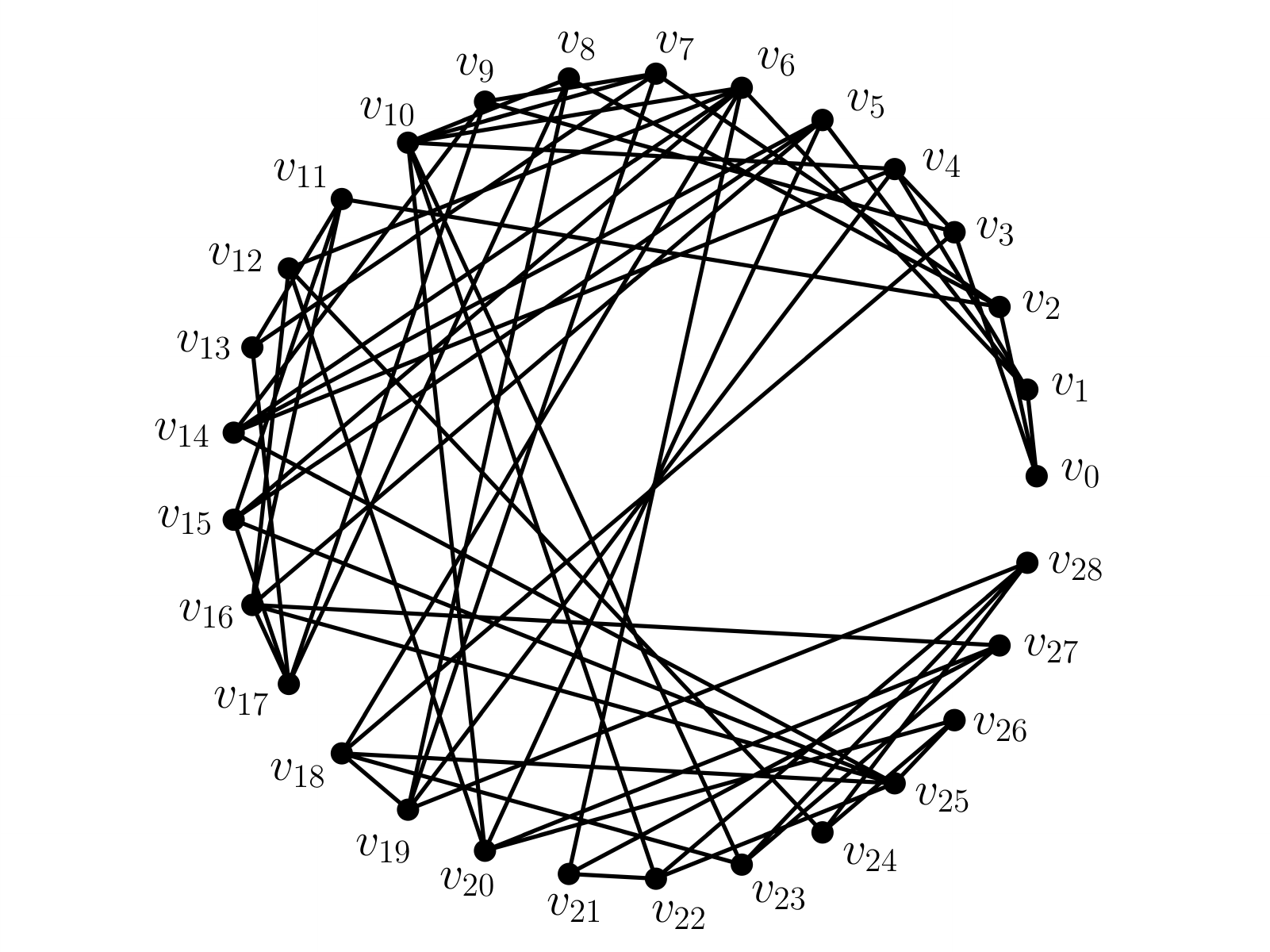}%
\caption{}%
\label{28vertexgraph}%
\end{center}
\end{figure}

\begin{center}

\renewcommand{\arraystretch}{1.5}

\resizebox{5.3in}{!} {
\begin{tabular}{|l|l|l|l|}

\hline

\multicolumn{4}{|c|}{Vertex Set of a 4-chromatic Subgraph of $G(\mathbb{Q}^3, \sqrt{22})$} \\ \hline

$v_0 = (-4, \frac{5}{3}, -1)$ 				&   $v_8 = (1,\frac53,4)$    &  $v_{16} = (3, 3, 2)$    &  $v_{24} = (\frac{17}{3}, \frac{4}{3}, \frac{1}{3})$    \\ \hline

$v_1 = (-\frac{7}{3}, \frac{7}{3}, \frac{10}{3})$					&   $v_9 = (\frac43,\frac43,-\frac43)$     &  $v_{17} = (\frac{10}{3}, -\frac{5}{3}, \frac{5}{3})$    &  $v_{25} = (6, 0, 0)$   \\ \hline

$v_2 = (-2,-\frac43,2)$					&   $v_{10} = (\frac{8}{3}, -\frac{8}{3}, \frac{10}{3})$     &  $v_{18} = (\frac{11}{3}, \frac{10}{3}, \frac{7}{3})$    &  $v_{26} = (\frac{19}{3}, -\frac{1}{3}, \frac{14}{3})$    \\ \hline

$v_3 = (-1, \frac{11}{3}, 2)$ 				&   $v_{11} = (\frac{8}{3}, -\frac{5}{3}, \frac{5}{3})$    &  $v_{19} = (4, -\frac{4}{3}, 2)$    &  $v_{27} = (\frac{23}{3}, \frac{10}{3}, \frac{7}{3})$    \\ \hline

$v_4 = (-\frac13, -\frac{2}{3}, \frac{1}{3})$					&   $v_{12} = (\frac{8}{3}, \frac{10}{3}, -\frac{8}{3})$    &  $v_{20} = (\frac{14}{3}, \frac{1}{3}, \frac{1}{3})$    &  $v_{28} = (\frac{26}{3}, -\frac{5}{3}, \frac{7}{3})$    \\ \hline

$v_5 = (0,0,0)$					&   $v_{13} = (3, -\frac{19}{3}, 2)$    &  $v_{21} = (\frac{16}{3}, 0, 0)$    &     \\ \hline

$v_6 = (\frac23, \frac13, \frac13)$					&   $v_{14} = (3, -3, -2)$    &  $v_{22} = (\frac{17}{3}, -\frac{14}{3}, \frac{1}{3})$    &     \\ \hline

$v_7 = (1,-\frac{10}{3},-1)$					&   $v_{15} = (3, -2, -3)$   &  $v_{23} = (\frac{17}{3}, \frac{1}{3}, \frac{16}{3})$    &     \\ \hline

\end{tabular}
}
\end{center}

\pagebreak

Figure \ref{grotzschtypegraph} contains the $4$-chromatic graph referenced in Section 3.  The charts that follow list the vertices of this graph as it was found as a subgraph of $G(\mathbb{Q}^3, \sqrt{t})$ for $t \in \{34, 66\}$.

\begin{figure}[h]%
\begin{center}
\includegraphics[scale=.4]{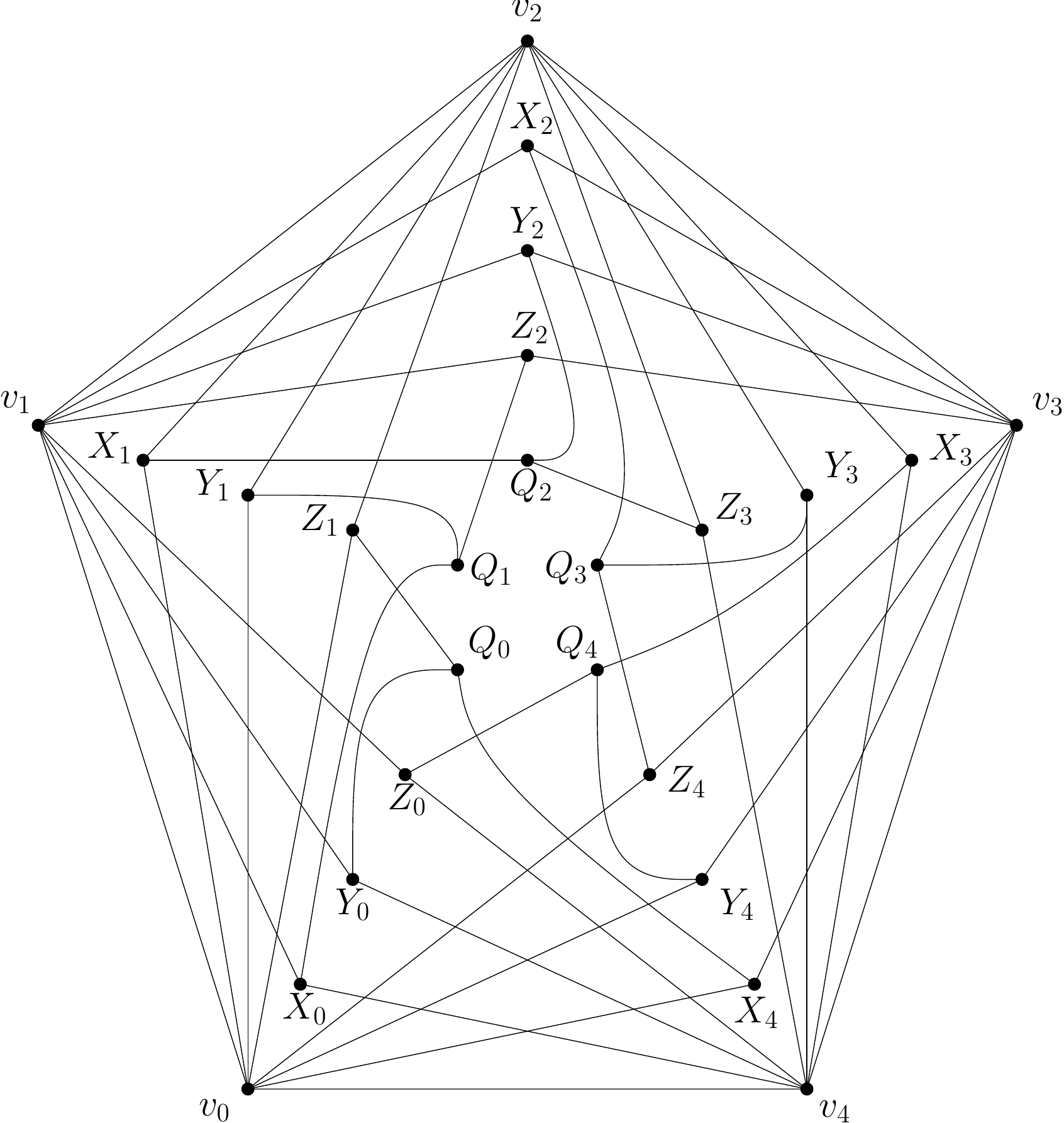}%
\caption{}%
\label{grotzschtypegraph}%
\end{center}
\end{figure}

\begin{center}

\renewcommand{\arraystretch}{1.9}

\resizebox{5.8in}{!} {
\begin{tabular}{|l|l|l|l|l|}

\hline

\multicolumn{5}{|c|}{Vertex Set of a 4-chromatic Subgraph of $G(\mathbb{Q}^3, \sqrt{34})$} \\ \hline

$v_0 = (0, 0, 0)$ 				&   $X_0 = (-\frac{36}{7}, -\frac{12}{7}, \frac{60}{7})$    &  $Y_{0} = (-\frac{9}{13}, -\frac{3}{13}, -\frac{12}{13})$    &  $Z_{0} = (-\frac{108}{11}, -\frac{36}{11}, \frac{36}{11})$ &  $Q_{0} = (-\frac{159}{227},-\frac{106}{227},\frac{1113}{227})$   \\ \hline

$v_1 = (-5, 0, 3)$ 				&   $X_1 = (-\frac{39}{7},-\frac17,\frac{12}{7})$    &  $Y_{1} = (-3, 5, 0)$    &  $Z_{1} = (-\frac{39}{7}, -\frac{1}{7}, \frac{12}{7})$ &  $Q_{1} =  (-\frac{2613}{803},\frac{236}{803},\frac{2757}{803})$  \\ \hline

$v_2 = (-8, 5, 3)$ 				&   $X_2 = (-\frac{16}{3},\frac{17}{3},\frac{13}{3})$    &  $Y_{2} = (-\frac{11}{3}, -\frac{11}{3}, -\frac{4}{3})$    &  $Z_{2} = (-\frac{16}{3}, \frac{17}{3}, \frac{13}{3})$ &  $Q_{2} =  (-\frac{429}{61},-\frac{344}{61},3)$  \\ \hline

$v_3 = (-4, 2, 0)$ 				&   $X_3 = (-\frac{64}{7}, -\frac{4}{7}, \frac{30}{7})$    &  $Y_{3} = (-\frac{8}{3},\frac{8}{3},\frac{8}{3})$    &  $Z_{3} = (-\frac{80}{9}, -\frac{4}{9}, \frac{44}{9})$ &  $Q_{3} =  (-\frac{375}{103},\frac{724}{103},-\frac{111}{103})$  \\ \hline

$v_4 = (-4, -3, 3)$ 				&   $X_4 = (0,5,4)$    &  $Y_{4} = (-\frac{5}{3}, \frac{5}{3}, \frac{16}{3})$    &  $Z_{4} = (0,5,3)$ &  $Q_{4} =  (-\frac{460}{11}, -\frac{395}{11}, \frac{509}{11})$  \\ \hline

\end{tabular}
}
\end{center}

\begin{center}

\renewcommand{\arraystretch}{1.9}

\resizebox{5.8in}{!} {
\begin{tabular}{|l|l|l|l|l|}

\hline

\multicolumn{5}{|c|}{Vertex Set of a 4-chromatic Subgraph of $G(\mathbb{Q}^3, \sqrt{66})$} \\ \hline

$v_0 = (0, 0, 0)$ 				&   $X_0 = (0, 2, -4)$    &  $Y_{0} = (0, 8, 8)$    &  $Z_{0} = (0, 2, -4)$ &  $Q_{0} = (\frac{829}{491},\frac{2565}{491},\frac{272}{491})$   \\ \hline

$v_1 = (4, 7, 1)$ 				&   $X_1 = (\frac{13}{3},\frac{20}{3},\frac{5}{3})$    &  $Y_{1} = (\frac{4}{93}, \frac{743}{93}, -\frac{137}{93})$    &  $Z_{1} = (-5, 4, 5)$ &  $Q_{1} =  (-7, 6, -5)$  \\ \hline

$v_2 = (-1, 11, 6)$ 				&   $X_2 = (-\frac{11}{3},\frac{25}{3},\frac{10}{3})$    &  $Y_{2} = (\frac{215}{33}, \frac{394}{33}, -\frac{229}{33})$    &  $Z_{2} = (-\frac{387}{97}, \frac{719}{97}, \frac{234}{97})$ &  $Q_{2} =  (-\frac{5354323}{3766147},-\frac{122492179}{11298441},\frac{21080450}{3766147})$  \\ \hline

$v_3 = (4, 6, 2)$ 				&   $X_3 = (-\frac{167}{307}, \frac{912}{307}, \frac{2195}{307})$    &  $Y_{3} = (-\frac{36}{5},\frac{72}{5}, 2)$    &  $Z_{3} = (\frac{380}{129}, \frac{536}{129}, \frac{530}{129})$ &  $Q_{3} =  (\frac{4643}{11837},\frac{165532}{11837},-\frac{10191}{11837})$  \\ \hline

$v_4 = (-4, 7, 1)$ 				&   $X_4 = (-\frac{4}{45}, \frac{19}{9}, \frac{353}{45})$    &  $Y_{4} = (\frac{20}{3}, \frac{5}{3}, -\frac{13}{3})$    &  $Z_{4} = (-\frac{104}{27}, \frac{193}{27}, \frac{7}{27})$ &  $Q_{4} =  (\frac{1061}{307}, -\frac{623}{307}, \frac{660}{307})$  \\ \hline

\end{tabular}
}
\end{center}

\pagebreak

Figure \ref{grotzschsubgraph2} contains the graph $H$ used as a device in Section 4, with the accompanying chart giving a representation of $H$ as a subgraph of $G(\mathbb{Q}^3, \sqrt{30})$.  Letting $S$ be the set of points simultaneously at distance $\sqrt{30}$ from each of $(-1, -2, 5)$ and $(\frac{16}{3}, \frac{8}{15}, \frac{94}{15})$, note that $S$ is a circle centered at $(\frac{13}{6}, -\frac{11}{15}, \frac{169}{30})$ and passing through point $(1,3,4)$.  It follows that $S$ has radius $\sqrt{\frac{1081}{10}}$ and Theorem \ref{vectorlength} guarantees the existence of a $4$-chromatic subgraph of $G(\mathbb{Q}^3, \sqrt{30})$.  

\begin{figure}[h]%
\begin{center}
\includegraphics[scale=.6]{subgraphlabeled.pdf}%
\caption{}%
\label{grotzschsubgraph2}%
\end{center}
\end{figure}

\begin{center}

\renewcommand{\arraystretch}{1.9}

\resizebox{5.8in}{!} {
\begin{tabular}{|l|l|l|l|}

\hline

\multicolumn{4}{|c|}{Vertex Set of a Subgraph of $G(\mathbb{Q}^3, \sqrt{30})$} \\ \hline

$x_0 = (0, 0, 0)$ 				&   $x_1 = (-1, -2, 5)$    &  $x_2 = (1, 3, 4)$    &  $x_3 = (\frac{16}{3}, \frac{8}{15}, \frac{94}{15})$    \\ \hline

$x_4 = (5, 2, 1)$ 				&   $y_0 = (-\frac{8}{21}, \frac{52}{21}, \frac{40}{21})$    &  $y_1 = (\frac{146}{63}, -\frac{145}{63}, \frac{277}{63})$    & $y_3 = (\frac{74}{21}, -\frac{191}{105}, \frac{487}{105})$     \\ \hline

$y_4 = (\frac{187}{63}, \frac{1202}{315}, \frac{811}{315})$ 				&   $z = (\frac{37}{11}, -\frac{31}{55}, -\frac{38}{55})$    &      &      \\ \hline

\end{tabular}
}
\end{center}

\end{document}